\numberwithin{equation}{section}
 \newtheorem{theorem}{Theorem}[section]
 \newtheorem{lemma}[theorem]{Lemma}
\def\3bar{{|\hspace{-.02in}|\hspace{-.02in}|}}
\def\T{{\mathcal{T}}}
\def\btau{\boldsymbol{\tau}}
\def\beta{\boldsymbol{\eta}}
\def\bgamma{\boldsymbol{\gamma}}
\def\cal#1{{\mathcal #1}}
\def\pT{{\partial T}}
\def\bw{{\mathbf{w}}}
\def\bq{{\mathbf{q}}}
\def\bQ{{\mathbf{Q}}}
\def\bn{{\mathbf{n}}}
\def\beta{{\boldsymbol{\eta}}}
\def\btheta{{\boldsymbol{\theta}}}
\def\btau{{\boldsymbol{\tau}}}
\newtheorem{remark}{Remark}[section]
\newtheorem{algorithm}{Weak Galerkin Algorithm}[section]
\numberwithin{equation}{section}
\def\3bar{{|\hspace{-.02in}|\hspace{-.02in}|}}
 \def\cal#1{\mathcal{#1}}
\def\ad#1{\begin{aligned}#1\end{aligned}}   
\def\a#1{\begin{align*}#1\end{align*}} \def\an#1{\begin{align}#1\end{align}}
\begin{document}

\title []
 {A Simple Weak Galerkin Finite Element Method for the Reissner-Mindlin Plate Model on Non-Convex Polytopal Meshes}

  \author {Chunmei Wang} 
  \address{Department of Mathematics, University of Florida, Gainesville, FL 32611, USA. }
  \email{chunmei.wang@ufl.edu}

\author {Shangyou Zhang}
\address{Department of Mathematical Sciences,  University of Delaware, Newark, DE 19716, USA}   \email{szhang@udel.edu}

\begin{abstract}
 This paper presents a simple weak Galerkin (WG) finite element method for the Reissner–Mindlin plate model that partially eliminates the need for traditionally employed stabilizers. The proposed approach accommodates general, including non-convex, polytopal meshes, thereby offering greater geometric flexibility. It utilizes bubble functions without imposing the restrictive conditions required by existing stabilizer-free WG methods, which simplifies implementation and broadens applicability to a wide range of partial differential equations (PDEs). Moreover, the method allows for flexible choices of polynomial degrees in the discretization and can be applied in any spatial dimension. We establish optimal-order error estimates for the WG approximation in a discrete 
$H^1$
  norm, and present numerical experiments that validate the theoretical results.
\end{abstract}

\keywords{weak Galerkin, finite element methods, Reissner-Mindlin,  non-convex, ploytopal meshes, bubble function, weak  gradient.}

\subjclass[2010]{65N30, 65N15, 65N12, 65N20}
 
\maketitle

\section{Introduction}
In this paper, we present a simple weak Galerkin finite element method on non-convex polytopal meshes. The method is applied to the Reissner–Mindlin plate model with clamped boundary conditions, seeking a rotation field $\btheta$  and a transverse displacement $w$ satisfying: 
\an{ \label{m1}
    \ad{
-\nabla \cdot ({\cal C}\epsilon (\btheta))-\lambda t^{-2} (\nabla w -\btheta)=&0, \qquad \text{in} \ \Omega,\\
-\nabla \cdot \lambda t^{-2}(\nabla w -\btheta)=&g, \qquad \text{in} \ \Omega,\\
 \btheta =0, \qquad w=&0,  \qquad \text{on} \ \partial\Omega, } }
where $g\in L^2(\Omega)$ is the transverse load, $C$  denotes the tensor of bending moduli, $\lambda$ is the shear correction factor, $t$ is the plate thickness, and   $\epsilon (\btheta)=\frac{1}{2}(\nabla\btheta+\nabla \btheta^T)$ is the symmetric gradient of $\btheta$. The computational domain   $\Omega\subset \mathbb R^d$ is assumed to be open, bounded, and with a Lipschitz continuous boundary $\partial\Omega$. While the formulation holds for general 
$d$-dimensional domains, for clarity of exposition we present the method and analysis in the two-dimensional case ($d=2$).
Note that all subsequent analysis can be readily extended to any spatial dimension 
$d$ without difficulty.

A typical weak formulation of \eqref{m1} seeks
 $(w,\btheta)\in H_0^1(\Omega)\times [H_0^1(\Omega)]^2$, such that
\begin{equation}\label{model} 
({\cal C}\epsilon(\btheta), \epsilon(\eta))+\lambda t^{-2} (\nabla w-\btheta, \nabla v-\eta)  =  (g, v), \qquad\qquad \text{in}\quad \Omega, 
 \end{equation}
for any $(v, \beta)\in H_0^1(\Omega)\times [H_0^1(\Omega)]^2$.

Shear locking remains a major obstacle in the numerical treatment of plate models. Achieving a locking-free discretization—one that converges uniformly with respect to the plate thickness 
$t$—is a nontrivial task \cite{1}. Established remedies include projection-based techniques, reduced integration, and mixed formulations \cite{1,2,3,4,5,6,7,8,9,10,11,12,13,14,15,16,17,18}. Locking-free methods without reduction operators have also been proposed in non-primal settings, notably within mixed, discontinuous Galerkin, and virtual element frameworks \cite{19,20,21,22,23,24,25}.
Finite element methods on general polygonal meshes have advanced rapidly in recent years due to their flexibility and broad applicability, with significant developments for second-order elliptic problems \cite{26,27,28,29}.  
 
The weak Galerkin (WG) finite element method represents a notable advancement in the numerical approximation of partial differential equations (PDEs). Its core idea is to reinterpret or approximate differential operators in a manner reminiscent of distribution theory, but tailored to piecewise polynomial spaces. Unlike traditional finite element techniques, WG relaxes regularity requirements on trial and test functions by introducing carefully designed stabilizers. Over the past decade, the WG framework has been extensively studied for a broad spectrum of model PDEs \cite{wg1, wg2, wg3, wg4, wg5, wg6, wg7, wg8, wg9, wg10, wg11, wg12, wg13, wg14, wg15, wg16, wg17, wg18, wg19, wg20, wg21, itera, wy3655, he, ku, zhang, yang, wang, li}, establishing its versatility and robustness in scientific computing. What distinguishes WG from other finite element approaches is its systematic use of weak derivatives and weak continuity concepts to construct schemes directly from the weak form of the governing PDEs. This flexibility has enabled the successful development of WG schemes for the Reissner–Mindlin plate model, albeit with stabilizers \cite{RM}.

A noteworthy extension of the WG framework is the Primal–Dual Weak Galerkin (PDWG) method \cite{pdwg1, pdwg2, pdwg3, pdwg4, pdwg5, pdwg6, pdwg7, pdwg8, pdwg9, pdwg10, pdwg11, pdwg12, pdwg13, pdwg14, pdwg15}. PDWG reformulates the discrete problem as a constrained optimization problem, where the constraints encode the weak formulation via weak derivatives. The resulting Euler–Lagrange equations couple the primary unknowns with dual variables (Lagrange multipliers), producing a symmetric numerical scheme. This approach effectively addresses several challenges that are difficult to handle using more conventional numerical methods.

This paper presents a simple weak Galerkin finite element formulation for the Reissner–Mindlin plate model that applies to both convex and non-convex polytopal meshes, eliminating  the need for one stabilizer. The elimination of one stabilizer  is achieved by employing higher-order polynomials in the computation of the discrete weak gradient. In contrast to existing stabilizer-free WG methods that are restricted to convex elements \cite{ye}, the proposed approach handles general non-convex polytopal meshes while maintaining the size and global sparsity pattern of the stiffness matrix, and at the same time substantially simplifying implementation. A rigorous analysis is provided, establishing optimal error estimates for the WG approximations in a discrete 
$H^1$ norm. Numerical experiments are included to confirm the theoretical results.

 In Section 2, we briefly review the definition of the weak gradient  and its discrete version.
In Section 3, we present the simple weak Galerkin scheme without the use of one stabilizer.
Section 4 is dedicated to deriving the existence and uniqueness of the solution.
In Section 5, we derive the error equation for the proposed weak Galerkin scheme.
Section 6 focuses on deriving the error estimate for the numerical approximation in the energy norm.
Finally, numerical results are provided to demonstrated our theoretical findings in Section 7.  

The standard notations are adopted throughout this paper. Let $D$ be any open bounded domain with Lipschitz continuous boundary in $\mathbb{R}^d$. We use $(\cdot,\cdot)_{s,D}$, $|\cdot|_{s,D}$ and $\|\cdot\|_{s,D}$ to denote the inner product, semi-norm and norm in the Sobolev space $H^s(D)$ for any integer $s\geq0$, respectively. For simplicity, the subscript $D$ is  dropped from the notations of the inner product and norm when the domain $D$ is chosen as $D=\Omega$. For the case of $s=0$, the notations $(\cdot,\cdot)_{0,D}$, $|\cdot|_{0,D}$ and $\|\cdot\|_{0,D}$ are simplified as $(\cdot,\cdot)_D$, $|\cdot|_D$ and $\|\cdot\|_D$, respectively.

\section{Weak Gradient Operator and Discrete Weak Gradient Operator}\label{Section:Hessian}
In this section, we briefly review the definition of the weak gradient operator and its discrete counterpart, following \cite{ye, RM}.

Let $T$ be a polygonal element with boundary $\partial T$.
A scalar-valued weak function on $T$ is defined as
  $w=\{w_0, w_b\}$ where $w_0 \in L^2(T)$ and $w_b \in L^2(\partial T)$.  
The component $w_0$ represents the value of $w$ in the interior of $T$, while $w_b$ represents its value on $\partial T$.  
In general, $w_b$ is assumed to be independent of the trace of $w_0$.
  A vector-valued weak function on $T$ is defined as
 $\beta=\{\beta_0, \beta_b\}$ where $\beta_0 \in [L^2(T)]^2$ and $\beta_b \in [L^2(\partial T)]^2$.  
Here, $\beta_0$ denotes the value of $\boldsymbol{\beta}$ in the interior of $T$, and $\beta_b$ denotes its value on $\partial T$.  
Similarly, $\beta_b$ is not necessarily the trace of $\beta_0$.

 The space of all scalar-valued weak functions on $T$ is given by
\begin{equation}\label{2.1}
 W (T)=\{w=\{w_0,w_b\}: w_0\in L^2(T), w_b\in L^{2}(\partial
 T)\}.
\end{equation}
The space of all vector-valued weak functions on $T$ is given by
\begin{equation}\label{2.1-1}
\Theta (T)=\{\beta=\{\beta_0,\beta_b\}: \beta_0\in [L^2(T)]^2, \beta_b\in [L^{2}(\partial
 T)]^2\}.
\end{equation}

Let $\boldsymbol{n}$ denote the unit outward normal vector to $\partial T$.  
For any $w \in W(T)$, the \emph{weak gradient} $\nabla_w w$ is defined as the bounded linear functional on $[H_0^1(T)]^2$ satisfying
\begin{equation}
        (\nabla_w w, \bq)_T=-(w_0, \nabla\cdot \bq)_T+\langle w_b, \bq \cdot \bn\rangle_{\partial T}, \qquad \forall\bq \in [H_0^1(T)]^2.
\end{equation}

For any $\beta\in \Theta (T)$, the weak symmetric gradient $\epsilon_w  (\beta) \in  [H_0^1(T)]_{sym}^{2\times 2}$ is defined on each element $T$ by
 \begin{equation}\label{2.3-first}
  (\epsilon_w (\beta), \btau)_T=-(\beta_0, \nabla \cdot \btau)_T+
  \langle \beta_b, \btau \cdot \bn \rangle_{\partial T},\quad \forall \btau\in   [H_0^1(T)]_{sym}^{2\times 2}.
  \end{equation}
  
Let $P_r(T)$ denote the space of polynomials on $T$ of total degree no greater than $r$, where $r \ge 0$ is an integer.  
 The discrete weak gradient operator on $T$, denoted   $\nabla_{w, r_1, T}$, is a linear operator
 from $W(T)$ to $[P_{r_1}(T)]^2$, such that,  for any $w\in W(T)$,
 $\nabla_{w, r_1, T}w$ is the unique polynomial vector in $[P_{r_1}(T)]^2$ satisfying 
 \begin{equation}\label{2.4}
 (\nabla_{w, r_1, T} w, \bq)_T=-(w_0, \nabla \cdot\bq)_T+
  \langle w_b, \bq\cdot \bn \rangle_{\partial T},\quad \forall \bq \in [P_{r_1}(T)]^2.
  \end{equation}
If $w_0\in
 H^1(T)$, applying the standard integration by parts to the first term on the right-hand side of \eqref{2.4} yields
\begin{equation}\label{2.4new}
   (\nabla_{w, r_1, T} w, \bq)_T= (\nabla w_0,  \bq)_T+
  \langle w_b-w_0, \bq\cdot \bn \rangle_{\partial T},\quad \forall \bq \in [P_{r_1}(T)]^2.
  \end{equation}  

Similarly, the \emph{discrete weak symmetric gradient} on $T$, denoted $\epsilon_{w,r_2,T}$, is a linear mapping
\[
\epsilon_{w,r_2,T} : \Theta(T) \longrightarrow [P_{r_2}(T)]^{2 \times 2}_{\mathrm{sym}},
\]
such that, for any $\boldsymbol{\beta} \in \Theta(T)$,
 \begin{equation}\label{2.3}
  (\epsilon_w (\beta), \btau)_T=-(\beta_0, \nabla \cdot \btau)_T+
  \langle \beta_b, \btau \cdot \bn \rangle_{\partial T},\quad \forall \btau\in   [P_{r_2}(T)]_{sym}^{2\times 2}.
  \end{equation}
If $\beta_0\in
 [H^1(T)]^2$, using integration by parts in \eqref{2.3} gives
\begin{equation}\label{2.3new}
  (\epsilon_w (\beta), \btau)_T= (\epsilon(\beta_0),   \btau)_T+
  \langle \beta_b-\beta_0, \btau \cdot \bn \rangle_{\partial T},\quad \forall \btau\in   [P_{r_2}(T)]_{sym}^{2\times 2}.
  \end{equation}
 
\section{Weak Galerkin Algorithms}\label{Section:WGFEM}
Let ${\cal T}_h$ be a shape-regular finite element partition of the polygonal domain $\Omega \subset \mathbb{R}^2$ \cite{wy3655}.
Denote by ${\mathcal E}_h$ the set of all edges in ${\cal T}_h$, and by ${\mathcal E}_h^0 = {\mathcal E}_h \setminus \partial\Omega$ the set of interior edges.
For each $T \in {\cal T}_h$, let $h_T$ be its diameter, and set $h = \max{T\in {\cal T}_h} h_T$.

Let $k,p \ge 1$ be integers.
For $T \in {\cal T}_h$, define the local weak finite element space
 \an{\label{W-T}
 W(k, p,  T) & =\{\{v_0,v_b \}: v_0\in P_k(T), v_b\in P_p(e),  e\subset \partial T\}.    
   }
By enforcing continuity of $v_b$ on $\mathcal{E}_h^0$, we obtain the global space
 $$
 W_h=\big\{\{v_0,v_b \}:\ \{v_0,v_b \}|_T\in W(k, p,   T),
 \forall T\in {\cal T}_h \big\}.
 $$
The subspace with vanishing boundary values is
$$
W_h^0=\{\{v_0,v_b\}\in W_h: v_b|_{e}=0, e\subset\partial\Omega\}.
$$

 Similarly, let $q,t \ge 1$ be integers.
For $T \in {\cal T}_h$, define
 \an{\label{t-T}
\Theta(q, t,  T) &=\{\{\beta_0,\beta_b \}: \beta_0\in [P_q(T)]^2, \beta_b\in [P_t(e)]^2,  e\subset \partial T\}.   
    }
Patching $\beta_b$ continuously across $\mathcal{E}_h^0$ yields 
 $$
 \Theta_h=\big\{\{\beta_0,\beta_b \}:\ \{\beta_0,\beta_b \}|_T\in \Theta(q, t, T),
 \forall T\in {\cal T}_h \big\},
 $$
with homogeneous boundary subspace
$$
\Theta_h^0=\{\{\beta_0,\beta_b\}\in \Theta_h: \beta_b|_{e}=0, e\subset\partial\Omega\}.
$$

For  $w\in W_h$, the discrete weak gradient is
$$
(\nabla_{w} w)|_T= \nabla_{w, r_1, T}(w |_T), \qquad \forall T\in \T_h,
$$
where $\nabla_{w,r_1,T}$ is defined by \eqref{2.4}.

 For $\beta\in \Theta_h$,  the discrete weak symmetric gradient is
$$
(\epsilon_{w}  \beta)|_T= \epsilon_{w, r_2, T}(\beta |_T), \qquad \forall T\in \T_h,
$$
with $\epsilon_{w,r_2,T}$ given by \eqref{2.3}.
 
  On each element $T\in\T_h$, let $Q_0$ be the $L^2$ projection onto $P_k(T)$ and $Q_b$ be the $L^2$ projection operator onto $P_{p}(e)$ for each $e \subset \partial T$. 
 For  $w\in H^1(\Omega)$, the $L^2$ projection into $W_h$ is defined by
 $$
  (Q_hw)|_T:=\{Q_0(w|_T),Q_b(w|_{\pT}) \},\qquad \forall T\in\T_h.
$$

Similarly, on $T \in {\cal T}_h$, let $\bQ_0$ be the $L^2$ projection onto $[P_q(T)]^2$ and $\bQ_b$ be the $L^2$ projection operator onto $[P_{t}(e)]^2$ for $e \subset \partial T$. 
 For   $\beta\in [H^1(\Omega)]^2$, the $L^2$ projection into $\Theta_h$ is given by
 $$
  (\bQ_h \beta)|_T:=\{\bQ_0(\beta|_T), \bQ_b(\beta|_{\pT}) \},\qquad \forall T\in\T_h.
$$

Let ${\cal Q}_{r_1}$ and ${\cal Q}_{r_2}$ be the $L^2$ projections onto  
$[P_{r_1}(T)]^2$ and   $[P_{r_2}(T)]^{2\times 2}$, respectively.

Using the weak formulation \eqref{model} of \eqref{m1}, the  simple weak Galerkin scheme is as follows.
\begin{algorithm}\label{PDWG1}
Find $\btheta_h=\{\btheta_0, \btheta_b\}\in \Theta_h^0$, and 
$ w_h=\{w_0, w_b \} \in W_h^0$ such that 
\begin{equation}\label{WG}
({\cal C}\epsilon_w (\btheta_h), \epsilon_w(\beta))+\lambda t^{-2}
   b((w_h,\btheta_0),(v,\beta_0))+s(w_h,v)=(g, v_0), 
\end{equation}
  for all $v=\{v_0, v_b\}\in W_h^0$ and $\beta=\{\beta_0,\beta_b\} \in \Theta_h^0$,
where  
\a{ (\cdot, \cdot)&=\sum_{T\in {\cal T}_h}  (\cdot, \cdot)_T, \\
    b((w_h,\btheta_0),(v,\beta_0)) &=
    (\nabla_{w} w_h-{\cal Q}_{r_1}\btheta_0, \nabla_{w} v-{\cal Q}_{r_1}\beta_0)\\
   s(w_h,v) &=\sum_{T\in {\cal T}_h}  h^{-1} \langle w_0-w_b, v_0-v_b\rangle_{\partial T} . }
\end{algorithm}

\section{Solution Existence and Uniqueness} 
Recall that ${\cal T}_h$ is a shape-regular finite element partition of $\Omega$.
For any $T \in {\cal T}_h$ and $\phi \in H^1(T)$, the trace inequality \cite{wy3655} holds:
\begin{equation}\label{tracein}
 \|\phi\|^2_{\partial T} \leq C(h_T^{-1}\|\phi\|_T^2+h_T \|\nabla \phi\|_T^2).
\end{equation}
If $\phi$ is a polynomial on $T$, the following trace inequality \cite{wy3655} holds:
\begin{equation}\label{trace}
\|\phi\|^2_{\partial T} \leq Ch_T^{-1}\|\phi\|_T^2.
\end{equation}

For $v=\{v_0, v_b\}\in W_h$,   define  \begin{equation}\label{3norm2}
\3bar v\3bar_{W_h}= (\nabla_{w}v, \nabla_{w}v) ^{\frac{1}{2}}, 
\end{equation}
and the discrete $H^1$ semi-norm
\begin{equation}\label{disnorm2}
\|v\|_{1, h}=\Big( \sum_{T\in {\cal T}_h} \|\nabla v_0\|_T^2+h_T^{-1}\|v_0-v_b\|_{\partial T}^2\Big)^{\frac{1}{2}}.
\end{equation}

For $\beta=\{\beta_0, \beta_b\}\in \Theta_h$,  define
\begin{equation}\label{3norm}
\3bar \beta\3bar_{\Theta_h}= (\epsilon_{w}\beta, \epsilon_{w}\beta) ^{\frac{1}{2}},
\end{equation}
and 
\begin{equation}\label{disnorm}
\|\beta\|_{1, h}=\Big( \sum_{T\in {\cal T}_h} \|\epsilon \beta_0\|_T^2+h_T^{-1}\|\beta_0-\beta_b\|_{\partial T}^2\Big)^{\frac{1}{2}}.
\end{equation}

\begin{lemma}\cite{wang}\label{norm1}
 For $w=\{w_0, w_b \}\in W_h$, there exists a constant $C>0$ such that
 $$
 \|\nabla w_0\|_T\leq C\|\nabla_{w} w\|_T.
 $$
\end{lemma}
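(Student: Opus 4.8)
The plan is to test the defining identity \eqref{2.4new} of $\nabla_{w,r_1,T}$ with a polynomial vector field assembled from $\nabla w_0$ and an element bubble function, chosen so that the boundary contribution in \eqref{2.4new} cancels while the interior contribution stays comparable to $\|\nabla w_0\|_T^2$. Fix $T\in\T_h$ and $w=\{w_0,w_b\}\in W_h$. Since $w_0\in P_k(T)$ we have $\nabla w_0\in[P_{k-1}(T)]^2$ and, in particular, $w_0\in H^1(T)$, so \eqref{2.4new} is available. Let $b_T$ be a bubble function on $T$: a polynomial of degree bounded in terms of the geometry of $T$, with $\|b_T\|_{L^\infty(T)}\le 1$, vanishing on $\partial T$, and satisfying the weighted norm-equivalence estimate
\begin{equation*}
\int_T b_T\,|\bq|^2\,dx\ \ge\ c_0\,\|\bq\|_T^2\qquad\text{for all }\ \bq\in[P_{k-1}(T)]^2,
\end{equation*}
where $c_0>0$ depends only on the shape-regularity of $\T_h$ and on the polynomial degree. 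Such bubble functions on general, possibly non-convex, polytopal elements are precisely the device the present method is built upon, and the condition that $b_T\nabla w_0$ lie in the test space $[P_{r_1}(T)]^2$ is where the enrichment of the discrete weak gradient (the higher degree $r_1$) comes in.

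First I would insert $\bq=b_T\nabla w_0$ into \eqref{2.4new}. Since $b_T\equiv 0$ on $\partial T$, the term $\langle w_b-w_0,(b_T\nabla w_0)\cdot\bn\rangle_{\partial T}$ vanishes, and \eqref{2.4new} reduces to
\begin{equation*}
(\nabla_{w}w,\ b_T\nabla w_0)_T\ =\ (\nabla w_0,\ b_T\nabla w_0)_T\ =\ \int_T b_T\,|\nabla w_0|^2\,dx .
\end{equation*}
The right-hand side is bounded below by $c_0\|\nabla w_0\|_T^2$ by the weighted norm-equivalence estimate, while the left-hand side is bounded above, via the Cauchy--Schwarz inequality and $\|b_T\nabla w_0\|_T\le\|b_T\|_{L^\infty(T)}\|\nabla w_0\|_T\le\|\nabla w_0\|_T$, by $\|\nabla_{w}w\|_T\|\nabla w_0\|_T$. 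Combining the two and cancelling one factor of $\|\nabla w_0\|_T$ (the claim being trivial when it vanishes) yields $\|\nabla w_0\|_T\le c_0^{-1}\|\nabla_{w}w\|_T$, i.e. the assertion with $C=c_0^{-1}$.

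The step I expect to carry the real weight is not this computation but the ingredient it rests on: the existence, on a general and possibly non-convex polytopal element $T$, of a bounded polynomial $b_T$ of controlled degree that vanishes on $\partial T$ and still satisfies the uniform lower bound $\int_T b_T|\bq|^2\,dx\gtrsim\|\bq\|_T^2$ on $[P_{k-1}(T)]^2$. This is exactly the bubble-function construction the paper is organized around, and it is the only place where the mesh geometry and the choice of $r_1$ truly intervene; the rest is Cauchy--Schwarz and a scaling argument. A route avoiding bubble functions would instead take $\bq=\nabla w_0$ directly in \eqref{2.4new}, giving $\|\nabla w_0\|_T^2=(\nabla_{w}w,\nabla w_0)_T+\langle w_0-w_b,(\nabla w_0)\cdot\bn\rangle_{\partial T}$; bounding the interior term as above and the boundary term by the polynomial trace inequality \eqref{trace} then reduces the lemma to an auxiliary estimate of the form $h_T^{-1/2}\|w_0-w_b\|_{\partial T}\le C\|\nabla_{w}w\|_T$, whose proof again hinges on the richness of the discrete weak gradient and is the more delicate step on that path.
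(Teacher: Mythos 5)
Your proof is correct and follows exactly the route the paper intends: the lemma is quoted from the reference \cite{wang} rather than proved in the text, but the degree choices recorded in Remark 4.1 ($r_1=N+k-1$ for convex and $r_1=2N+k-1$ for non-convex elements, with $N$ the number of edges) are precisely what is needed so that $b_T\nabla w_0$ (with $b_T$ a product of edge functions, squared in the non-convex case) lies in $[P_{r_1}(T)]^2$, which is the bubble-function argument you give. Your identification of the weighted norm equivalence $\int_T b_T|\bq|^2\,dx\gtrsim\|\bq\|_T^2$ as the load-bearing ingredient is also accurate.
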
 

\begin{remark}  \cite{wang}
 For nonconvex element, we take  $r_1=2N+k-1$ and $r_2=2N+k-1$.  For convex element,  we take $r_1=N+k-1$ and $r_2=N+k-1$. Here $N$ denotes the number of edges of the polygons.  
\end{remark}

 \begin{lemma}\label{norm2}\cite{ela}
 For $\beta=\{\beta_0, \beta_b \}\in \Theta_h$, there exists a constant $C>0$ such that
 $$
 \|\epsilon \beta_0\|_T\leq C\|\epsilon_{w} \beta\|_T.
 $$
\end{lemma}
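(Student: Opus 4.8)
The plan is to mirror the argument behind Lemma \ref{norm1}, carried over from scalar functions and the gradient to vector fields and the symmetric gradient. Fix an element $T \in \T_h$ and write $\beta = \{\beta_0,\beta_b\}$ with $\beta_0 \in [P_q(T)]^2$. Since $\beta_0$ is a polynomial, hence in $[H^1(T)]^2$, I would start from the integration-by-parts form \eqref{2.3new} of the discrete weak symmetric gradient: for every $\btau \in [P_{r_2}(T)]_{sym}^{2\times 2}$,
\begin{equation*}
(\epsilon(\beta_0),\btau)_T = (\epsilon_{w}\beta,\btau)_T - \langle \beta_b - \beta_0,\ \btau\cdot\bn\rangle_{\partial T}.
\end{equation*}
The boundary term is the only obstruction between this identity and the desired estimate, so the strategy is to test against a function that annihilates it while still seeing all of $\epsilon(\beta_0)$ in the interior.

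To do this I would invoke an interior bubble function $b_T$ attached to $T$ — of degree $N$ on convex elements and degree $2N$ on non-convex ones, matching the degrees $r_1,r_2$ recorded in the Remark after Lemma \ref{norm1} — with the properties that $b_T$ vanishes on $\partial T$, $0 \le b_T \le C$ on $T$, and $\int_T b_T\,\rho^2\,dx \ge C\|\rho\|_T^2$ for every $\rho \in P_{q-1}(T)$. Taking $\btau = b_T\,\epsilon(\beta_0)$ — which is symmetric-matrix valued and, provided $r_2$ is taken large enough (as in the Remark), lies in $[P_{r_2}(T)]_{sym}^{2\times 2}$ — the boundary integral drops out because $b_T|_{\partial T}=0$, leaving
\begin{equation*}
\int_T b_T\,|\epsilon(\beta_0)|^2\,dx = (\epsilon_{w}\beta,\ b_T\,\epsilon(\beta_0))_T \le \|\epsilon_{w}\beta\|_T\,\|b_T\,\epsilon(\beta_0)\|_T \le C\,\|\epsilon_{w}\beta\|_T\,\|\epsilon(\beta_0)\|_T,
\end{equation*}
where the last step uses the uniform bound on $b_T$. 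Applying the bubble estimate entrywise to the components of $\epsilon(\beta_0)$, each a polynomial of degree at most $q-1$, bounds the left-hand side below by $C\|\epsilon(\beta_0)\|_T^2$; cancelling one factor of $\|\epsilon(\beta_0)\|_T$ yields the conclusion.

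The one genuinely delicate ingredient is the existence of such a bubble $b_T$ on an arbitrary shape-regular polytopal element, including non-convex ones: there the naive product of the edge-linear functions changes sign inside $T$, which is precisely why one passes to the higher-degree construction — and hence to the larger $r_1,r_2$ — indicated in the Remark, so that $b_T$ remains nonnegative while $b_T\,\epsilon(\beta_0)$ stays admissible as a test tensor. Once that building block and its associated norm-equivalence on polynomials are available, together with the routine scaling/shape-regularity bookkeeping that makes $C$ independent of $T$ and $h$, the remainder is just the short duality computation above, and the symmetric-tensor setting introduces no new difficulty beyond handling $\epsilon(\beta_0)$ componentwise.
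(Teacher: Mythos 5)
Your argument is correct and is essentially the proof used for this result: the paper itself does not prove Lemma \ref{norm2} but cites it from \cite{ela}, where the bound is established by exactly the bubble-function device you describe (test \eqref{2.3new} with $\btau=b_T\,\epsilon(\beta_0)$, kill the boundary term since $b_T|_{\partial T}=0$, and use the weighted norm equivalence for polynomials together with the degree choices $r_2=N+k-1$ or $2N+k-1$ from the Remark). Your degree bookkeeping and the caveats about constructing a nonnegative bubble on non-convex elements match the cited construction, so there is nothing further to add.
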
  

\begin{lemma}\label{normeqva} \cite{wang}  There exist $C_1, C_2 > 0$ such that for any  $v=\{w_0, w_b \} \in W_h$, 
 \begin{equation}\label{normeq}
 C_1\|w\|_{1, h}\leq \3bar v\3bar_{W_h}  \leq C_2\|w\|_{1, h}.
\end{equation}
\end{lemma}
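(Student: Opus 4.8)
The plan is to establish the two-sided bound in Lemma~\ref{normeqva} by combining the definition \eqref{3norm2} of $\3bar v\3bar_{W_h}$, the identity \eqref{2.4new} for the discrete weak gradient on elements where $v_0\in H^1(T)$ (which holds since $v_0\in P_k(T)$), the trace inequality \eqref{trace} for polynomials, and the coercivity-type bound already recorded in Lemma~\ref{norm1}. The upper bound $\3bar v\3bar_{W_h}\le C_2\|v\|_{1,h}$ is the routine direction: starting from \eqref{2.4new}, take $\bq=\nabla_w v|_T\in[P_{r_1}(T)]^2$, apply Cauchy--Schwarz on both the interior term $(\nabla v_0,\bq)_T$ and the boundary term $\langle v_b-v_0,\bq\cdot\bn\rangle_{\partial T}$, use the polynomial trace inequality \eqref{trace} to absorb $\|\bq\cdot\bn\|_{\partial T}\le\|\bq\|_{\partial T}\le Ch_T^{-1/2}\|\bq\|_T$, and then cancel one power of $\|\nabla_w v\|_T=\|\bq\|_T$ from both sides. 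Summing over $T\in\T_h$ and recalling the definition \eqref{disnorm2} of $\|v\|_{1,h}$ gives the claim.

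For the lower bound $C_1\|v\|_{1,h}\le\3bar v\3bar_{W_h}$, I would control the two pieces of $\|v\|_{1,h}^2=\sum_T\big(\|\nabla v_0\|_T^2+h_T^{-1}\|v_0-v_b\|_{\partial T}^2\big)$ separately. The gradient term $\|\nabla v_0\|_T\le C\|\nabla_w v\|_T$ is exactly Lemma~\ref{norm1}, so nothing new is needed there. For the jump term, I would again use \eqref{2.4new}: choose a test polynomial $\bq\in[P_{r_1}(T)]^2$ adapted to the boundary jump $v_0-v_b$ on $\partial T$ so that $\langle v_0-v_b,\bq\cdot\bn\rangle_{\partial T}$ is comparable to $h_T^{-1}\|v_0-v_b\|_{\partial T}^2$ while $\|\bq\|_T$ stays controlled by $h_T^{-1/2}\|v_0-v_b\|_{\partial T}$ (a scaling/inverse-estimate argument on the reference element, using that $r_1\ge k$ so the space $[P_{r_1}(T)]^2$ is rich enough to ``see'' the piecewise polynomial boundary data of degree $p$ — here one uses the specific choice of $r_1$ from the Remark). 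Rearranging \eqref{2.4new} as $\langle v_b-v_0,\bq\cdot\bn\rangle_{\partial T}=(\nabla_w v,\bq)_T-(\nabla v_0,\bq)_T$ and applying Cauchy--Schwarz together with Lemma~\ref{norm1} then bounds $h_T^{-1/2}\|v_0-v_b\|_{\partial T}$ by $C\|\nabla_w v\|_T$. Summing over elements finishes the proof.

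The main obstacle is the lower estimate for the boundary jump term: one must exhibit a test function $\bq\in[P_{r_1}(T)]^2$ on a general (possibly non-convex) polytope $T$ whose normal trace on $\partial T$ realizes the jump $v_0-v_b\in L^2(\partial T)$ up to the right $h_T$-scaling, with $\|\bq\|_T$ not blowing up. This is precisely where the enlarged degree $r_1=2N+k-1$ (non-convex) or $r_1=N+k-1$ (convex) enters — it guarantees that the discrete weak gradient ``captures'' enough information to dominate the full discrete $H^1$ semi-norm without a stabilizer on the rotation equation. Since the statement is quoted from \cite{wang}, I would either cite that construction directly or reproduce it via a domain decomposition of $T$ into triangles (one per edge) on which a bubble-type polynomial with prescribed normal trace is built and then extended by zero, controlling norms by standard scaling arguments. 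Everything else reduces to Cauchy--Schwarz, the trace inequalities \eqref{tracein}--\eqref{trace}, and Lemma~\ref{norm1}.
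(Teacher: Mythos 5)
The paper does not prove this lemma at all: it is imported verbatim from \cite{wang} (and its vector analogue from \cite{ela}) with only a citation, so there is no in-paper argument to compare against. Your outline is essentially the standard proof from that reference: the upper bound by testing \eqref{2.4new} with $\bq=\nabla_w v$, the polynomial trace inequality \eqref{trace}, and cancellation of one factor of $\|\nabla_w v\|_T$; the lower bound by splitting $\|v\|_{1,h}$ into the gradient part (Lemma~\ref{norm1}) and the jump part, the latter handled by exhibiting a test function $\bq\in[P_{r_1}(T)]^2$ whose normal trace reproduces $v_0-v_b$ on $\partial T$ with $\|\bq\|_T\lesssim h_T^{-1/2}\|v_0-v_b\|_{\partial T}$. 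You also correctly identify that the enlarged degree $r_1$ from the Remark is exactly what makes this test function available on non-convex elements.

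One detail in your sketch of the key construction would not work as literally stated: a bubble built triangle-by-triangle and ``extended by zero'' is only piecewise polynomial on $T$, hence not an admissible $\bq\in[P_{r_1}(T)]^2$. The construction in \cite{wang} instead uses global polynomial edge functions on $T$ --- products of the affine functions cutting the remaining edges (squared in the non-convex case, which is why $r_1=2N+k-1$ rather than $N+k-1$) multiplied by a polynomial extension of $(v_0-v_b)|_e\,\bn$ --- together with scaling and norm-equivalence arguments on the reference configuration. Since you hedge by offering to cite that construction directly, the proposal is acceptable as a proof outline, but if you were to write it out you would need the genuine polynomial edge-bubble rather than the zero extension.
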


\begin{lemma}\label{normeqva2}  \cite{ela} There exist $C_1, C_2 > 0$ such that for any $\beta=\{\beta_0, \beta_b \} \in \Theta_h$,  
 \begin{equation}\label{normeq2}
 C_1\|\beta\|_{1, h}\leq \3bar \beta\3bar_{\Theta_h}  \leq C_2\|\beta\|_{1, h}.
\end{equation}
\end{lemma}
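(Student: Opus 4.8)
The plan is to establish the two inequalities in \eqref{normeq2} separately, following the template of the scalar estimate in Lemma \ref{normeqva}, with the discrete weak gradient replaced by the discrete weak symmetric gradient $\epsilon_w$ and $\nabla v_0$ replaced by $\epsilon\beta_0$.

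For the upper bound $\3bar\beta\3bar_{\Theta_h}\le C_2\|\beta\|_{1,h}$ I would invoke the integration-by-parts identity \eqref{2.3new}, valid since $\beta_0\in[P_q(T)]^2\subset[H^1(T)]^2$, and test it with $\btau=\epsilon_w\beta\in[P_{r_2}(T)]_{\mathrm{sym}}^{2\times2}$ on each $T\in\T_h$, which gives
\[
\|\epsilon_w\beta\|_T^2=(\epsilon(\beta_0),\epsilon_w\beta)_T+\langle\beta_b-\beta_0,(\epsilon_w\beta)\cdot\bn\rangle_{\partial T}.
\]
Bounding both terms by Cauchy--Schwarz and using the polynomial trace inequality \eqref{trace} in the form $\|(\epsilon_w\beta)\cdot\bn\|_{\partial T}\le Ch_T^{-1/2}\|\epsilon_w\beta\|_T$, then dividing through by $\|\epsilon_w\beta\|_T$, yields $\|\epsilon_w\beta\|_T\le\|\epsilon(\beta_0)\|_T+Ch_T^{-1/2}\|\beta_0-\beta_b\|_{\partial T}$; squaring and summing over $T\in\T_h$ gives the desired bound against $\|\beta\|_{1,h}$.

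For the lower bound $C_1\|\beta\|_{1,h}\le\3bar\beta\3bar_{\Theta_h}$ it suffices to control $\sum_T\|\epsilon\beta_0\|_T^2$ and $\sum_T h_T^{-1}\|\beta_0-\beta_b\|_{\partial T}^2$ by $C\3bar\beta\3bar_{\Theta_h}^2$. The first is immediate from Lemma \ref{norm2}. For the jump term I would, on each $T$, construct a symmetric matrix-valued polynomial $\btau\in[P_{r_2}(T)]_{\mathrm{sym}}^{2\times2}$ with the two properties $\langle\btau\cdot\bn,\beta_0-\beta_b\rangle_{\partial T}\ge c\,\|\beta_0-\beta_b\|_{\partial T}^2$ and $\|\btau\|_T\le Ch_T^{1/2}\|\beta_0-\beta_b\|_{\partial T}$; this is exactly where the enlarged degree $r_2$ from the Remark following Lemma \ref{norm1} ($r_2=2N+k-1$ for non-convex elements, $r_2=N+k-1$ for convex ones, $N$ the number of edges) is needed. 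Plugging this $\btau$ into \eqref{2.3new} gives
\[
c\,\|\beta_0-\beta_b\|_{\partial T}^2\le(\epsilon(\beta_0),\btau)_T-(\epsilon_w\beta,\btau)_T\le\big(\|\epsilon(\beta_0)\|_T+\|\epsilon_w\beta\|_T\big)\|\btau\|_T,
\]
whence $h_T^{-1}\|\beta_0-\beta_b\|_{\partial T}^2\le C(\|\epsilon(\beta_0)\|_T^2+\|\epsilon_w\beta\|_T^2)\le C\|\epsilon_w\beta\|_T^2$ by Lemma \ref{norm2}; summing over $T$ completes the estimate.

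The main obstacle I foresee is the construction of the test matrix $\btau$ on a possibly non-convex polygon. I would build it edgewise, each contribution being the product of the (extended) target edge datum $\beta_0-\beta_b$, a bubble function assembled from the linear functions vanishing on the remaining edges---raised to a higher power in the non-convex case so that the bubble keeps a fixed sign inside $T$---and the two symmetric rank-one matrices formed from the edge normal and tangent so that both the normal and the tangential parts of the trace can be reproduced; the degree count showing that $r_2$ as prescribed in the Remark is large enough for all of this, and the homogeneity argument giving the $h_T^{1/2}$ scaling of $\|\btau\|_T$, are the delicate points, whereas the rest reduces to Cauchy--Schwarz, the trace inequalities \eqref{tracein}--\eqref{trace}, and norm equivalence on finite-dimensional polynomial spaces.
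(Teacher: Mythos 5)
The paper itself gives no proof of this lemma: it is quoted verbatim from the reference \cite{ela} (and its scalar analogue Lemma \ref{normeqva} from \cite{wang}), so there is no in-paper argument to compare against. Your outline is the standard one and is essentially the argument used in those cited works: the upper bound by testing \eqref{2.3new} with $\btau=\epsilon_w\beta$ together with the polynomial trace inequality \eqref{trace}, and the lower bound by combining Lemma \ref{norm2} with a dual test function $\btau$ that recovers the jump $\beta_0-\beta_b$ on $\partial T$. Both halves are sound as written. The only substantive content is the construction of $\btau$ on a general (possibly non-convex) polygon, which you correctly identify as the crux and for which you give the right recipe --- edge bubbles built from products of the linear forms of the other edges, squared in the non-convex case to fix the sign, multiplied by an extension of the edge datum and by the symmetric matrices $\bn\bn^{T}$ and $\bn\bt^{T}+\bt\bn^{T}$ to reproduce both normal and tangential components of $\btau\cdot\bn$; the degree budget $r_2=2N+k-1$ (resp.\ $N+k-1$) in the Remark is exactly what makes this fit inside $[P_{r_2}(T)]^{2\times 2}_{\mathrm{sym}}$, and the $h_T^{1/2}$ scaling of $\|\btau\|_T$ follows from shape regularity and norm equivalence on finite-dimensional spaces. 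To turn the sketch into a complete proof one would still have to verify the uniform positivity of the edge bubbles on each edge and carry out the degree count explicitly, but these are exactly the details supplied in \cite{ela}, and nothing in your plan would fail.
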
  

\begin{theorem}
The  WG Algorithm \ref{PDWG1} admits a unique solution.
\end{theorem}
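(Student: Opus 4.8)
The plan is to use that \eqref{WG} is a square linear system posed on the finite-dimensional space $\Theta_h^0 \times W_h^0$, so that existence is equivalent to uniqueness, and uniqueness is equivalent to the assertion that the homogeneous problem (set $g=0$) admits only the zero solution. Accordingly, I would assume $g=0$ and test \eqref{WG} with $\beta = \btheta_h$ and $v = w_h$, which gives
\[
({\cal C}\epsilon_{w}(\btheta_h),\epsilon_{w}(\btheta_h)) + \lambda t^{-2}\,\|\nabla_{w} w_h - {\cal Q}_{r_1}\btheta_0\|^2 + \sum_{T\in\T_h} h^{-1}\|w_0-w_b\|_{\partial T}^2 = 0.
\]
Since the bending tensor ${\cal C}$ is symmetric positive definite and $\lambda t^{-2}>0$, all three terms are nonnegative, hence each vanishes: $\epsilon_{w}\btheta_h = 0$ on every $T\in\T_h$; $\nabla_{w} w_h = {\cal Q}_{r_1}\btheta_0$; and $w_0 = w_b$ on $\partial T$ for every $T$.

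From $\epsilon_{w}\btheta_h = 0$ and the norm equivalence of Lemma~\ref{normeqva2} I would deduce $\|\btheta_h\|_{1,h}=0$, i.e. $\epsilon\btheta_0 = 0$ on each element and $\btheta_0 = \btheta_b$ on each $\partial T$. The continuity of $\btheta_b$ across $\E_h^0$ then makes $\btheta_0$ single-valued over interior edges, so $\btheta_0 \in [H^1(\Omega)]^2$, and the homogeneous boundary data $\btheta_b|_{\partial\Omega}=0$ place it in $[H_0^1(\Omega)]^2$; Korn's first inequality (equivalently, the characterization of infinitesimal rigid motions together with the vanishing trace) then forces $\btheta_0 = 0$, hence $\btheta_b = 0$ and $\btheta_h = 0$. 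Consequently ${\cal Q}_{r_1}\btheta_0 = 0$, so $\nabla_{w} w_h = 0$ and $\3bar w_h\3bar_{W_h}=0$; Lemma~\ref{normeqva} gives $\|w_h\|_{1,h}=0$, so $\nabla w_0 = 0$ on each element and $w_0 = w_b$ on each $\partial T$, and the same continuity-plus-boundary-condition argument (now a plain Poincar\'e argument for the piecewise-constant $w_0$) yields $w_0 = 0$ and $w_b = 0$. Thus $(\btheta_h, w_h) = (0,0)$, which proves the theorem.

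The testing identity and the reading off of the three vanishing contributions are routine. The step I expect to require the most care is the passage from "energy quantity $=0$" to "the weak function is identically zero" on $\Theta_h^0$ and $W_h^0$, that is, verifying that $\3bar\cdot\3bar_{\Theta_h}$ and $\3bar\cdot\3bar_{W_h}$ are genuine norms on these spaces rather than mere seminorms. This is exactly where the inter-element continuity of the boundary components and the homogeneous boundary data are essential: they promote the per-element rigid motions (respectively constants) to a single global $H_0^1$ rigid motion (respectively constant), which Korn's (respectively Poincar\'e's) inequality then annihilates; the positive definiteness of ${\cal C}$ is the other indispensable structural ingredient. Everything else follows immediately from Lemmas~\ref{normeqva}--\ref{normeqva2}.
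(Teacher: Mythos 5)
Your proposal is correct and follows essentially the same route as the paper: test the homogeneous system with $(\btheta_h,w_h)$, read off the three vanishing nonnegative terms, and use the norm equivalences of Lemmas~\ref{normeqva}--\ref{normeqva2} together with the inter-element continuity and homogeneous boundary data to conclude $\btheta_h=0$ and then $w_h=0$. Your treatment of the rotation is in fact slightly more careful than the paper's, which asserts that $\epsilon\btheta_0=0$ makes $\btheta_0$ a constant, whereas it only makes it a rigid motion; your appeal to the characterization of infinitesimal rigid motions with vanishing trace (Korn's first inequality) closes that small gap.
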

\begin{proof}
It suffices to show that if $g=0$, the only solution is the trivial one.  Setting  $(\beta, v)=(\btheta_h, w_h)$ in \eqref{WG} with $g=0$ yields 
$$
({\cal C}\epsilon_w  (\btheta_h), \epsilon_w(\btheta_h))+\lambda t^{-2}
(\nabla_{w} w_h- {\cal Q}_{r_1}\btheta_0, \nabla_{w} w_h- {\cal Q}_{r_1}\btheta_0)+s(w_h, w_h)=0.
$$
Hence $({\cal C}\epsilon_w  (\btheta_h), \epsilon_w(\btheta_h))=0$,   $\lambda t^{-2}
(\nabla_{w} w_h-{\cal Q}_{r_1} \btheta_0, \nabla_{w} w_h- {\cal Q}_{r_1}\btheta_0)=0$ and $s(w_h, w_h)=0$. 

From $({\cal C}\epsilon_w  (\btheta_h), \epsilon_w(\btheta_h))=0$ and \eqref{normeq2}, we obtain 
$$
\|\btheta_h\|_{1, h}=0,
$$
which implies  $\epsilon \btheta_0=0$ in each $T\in {\cal T}_h$ and $\btheta_0=\btheta_b$ on each $\partial T$. Thus  $\btheta_0$ is a constant in $\Omega$. Since $\btheta_0=\btheta_b$ on each $\partial T$ and $\btheta_b=0$ on $\partial\Omega$, it follows that $\btheta_0\equiv 0$ in $\Omega$, hence $\btheta_b\equiv 0$ and $\btheta_h\equiv 0$ in $\Omega$.  Substituting $\btheta_0\equiv 0$ in $\Omega$ into   $\lambda t^{-2}
(\nabla_{w} w_h- {\cal Q}_{r_1} \btheta_0, \nabla_{w} w_h- {\cal Q}_{r_1} \btheta_0)=0$ yields 
$\lambda t^{-2}
(\nabla_{w} w_h , \nabla_{w} w_h)=0$. This, from \eqref{normeq},  gives
$$\|w_h\|_{1,h}=0.$$ This implies $\nabla w_0=0$ on each $T\in {\cal T}_h$ and $w_0=w_b$ on each $\partial T$. This indicates $w_0$ is a constant in the domain $\Omega$. Using $w_0=w_b$ on each $\partial T$ and $w_b=0$ on $\partial\Omega$, we have $w_0\equiv 0$ in $\Omega$. Thus, we have $w_b\equiv 0$ and $w_h\equiv 0$ in $\Omega$.

Therefore, the solution is trivial, completing the proof. 
\end{proof}

\section{Error Equations} 
In this section, we derive the error equations for the WG scheme \ref{PDWG1}.

\begin{lemma}\label{Lemma5.1}   The following identities hold for any element $T$:
\begin{equation}\label{pro}
\nabla_{w}u ={\cal Q}_{r_1}(\nabla u), \qquad \forall u\in H^1(T),
\end{equation}
\begin{equation}\label{pro2}
\epsilon_{w} \beta ={\cal Q}_{r_2}(\epsilon \beta), \qquad \forall \beta\in H^1(T).
\end{equation}
\begin{equation}\label{pro3}
\nabla_{w}Q_hu ={\cal Q}_{r_1}(\nabla u), \qquad \forall u\in H^1(T).
\end{equation} 
\end{lemma}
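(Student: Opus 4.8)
The plan is to prove the three identities in Lemma \ref{Lemma5.1} by testing the definitions of the discrete weak (symmetric) gradients against arbitrary polynomial test functions and reducing everything to the characterizing property of the $L^2$ projections $\mathcal{Q}_{r_1}$ and $\mathcal{Q}_{r_2}$. The key observation is that each identity asks us to show that two polynomials of a fixed degree agree, so it suffices to show their $L^2$ inner products against every test polynomial in the appropriate space coincide.

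For \eqref{pro}: let $u\in H^1(T)$ and fix $\bq\in[P_{r_1}(T)]^2$. Since $u\in H^1(T)$, I would use the integration-by-parts form \eqref{2.4new} applied to the weak function $\{u,u|_{\partial T}\}$; here the boundary term $\langle u - u, \bq\cdot\bn\rangle_{\partial T}$ vanishes, leaving $(\nabla_{w}u,\bq)_T = (\nabla u,\bq)_T$. On the other hand, by definition of the $L^2$ projection, $(\mathcal{Q}_{r_1}(\nabla u),\bq)_T = (\nabla u,\bq)_T$ for all $\bq\in[P_{r_1}(T)]^2$. Since both $\nabla_w u$ and $\mathcal{Q}_{r_1}(\nabla u)$ lie in $[P_{r_1}(T)]^2$ and have the same inner product against every element of that space, they are equal. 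Identity \eqref{pro2} is entirely parallel, using \eqref{2.3new} with $\beta_b - \beta_0 = 0$ on $\partial T$ and the defining property of $\mathcal{Q}_{r_2}$ against test matrices $\btau\in[P_{r_2}(T)]^{2\times2}_{\mathrm{sym}}$; one should note that the $L^2$ projection onto the full $[P_{r_2}(T)]^{2\times2}$ of a symmetric-matrix-valued function is again symmetric, so testing only against symmetric $\btau$ loses no information.

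For \eqref{pro3}: here $Q_h u = \{Q_0 u, Q_b u\}$, and the interior component $Q_0 u\in P_k(T)$ need not equal $u$, so I would go back to the original (non-integrated) definition \eqref{2.4}. For any $\bq\in[P_{r_1}(T)]^2$,
\begin{equation*}
(\nabla_w Q_h u,\bq)_T = -(Q_0 u,\nabla\cdot\bq)_T + \langle Q_b u,\bq\cdot\bn\rangle_{\partial T}.
\end{equation*}
Since $\nabla\cdot\bq\in P_{r_1-1}(T)\subset P_k(T)$ (recalling from the Remark that $r_1 \ge k$) and $\bq\cdot\bn|_e\in P_{r_1}(e)\subset P_p(e)$ (using $p\ge$ the relevant degree, which holds under the standing assumption $p\geq 1$ together with how $r_1$ is chosen; if needed one assumes $p\ge r_1$ or simply $p$ large enough as in the companion paper), the projection properties give $(Q_0 u,\nabla\cdot\bq)_T = (u,\nabla\cdot\bq)_T$ and $\langle Q_b u,\bq\cdot\bn\rangle_{\partial T} = \langle u,\bq\cdot\bn\rangle_{\partial T}$. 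Hence the right-hand side equals $-(u,\nabla\cdot\bq)_T + \langle u,\bq\cdot\bn\rangle_{\partial T}$, which by \eqref{2.4} (or by integration by parts, since $u\in H^1(T)$) equals $(\nabla_w\{u,u|_{\partial T}\},\bq)_T = (\nabla_w u,\bq)_T = (\mathcal{Q}_{r_1}(\nabla u),\bq)_T$ using \eqref{pro}. Again both sides are polynomials in $[P_{r_1}(T)]^2$, so equality of all inner products gives the identity.

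The main obstacle, such as it is, is bookkeeping with the polynomial degrees: one must verify that $\nabla\cdot\bq$ and $\bq\cdot\bn|_e$ land in the spaces $P_k(T)$ and $P_p(e)$ onto which $Q_0$ and $Q_b$ project, so that the projections can be "removed" against these test functions. This is where the choice of $r_1$ (and the assumption that $k,p$ are large enough relative to $r_1$) is used; given the degree conventions recorded in the Remark, the inclusions $P_{r_1-1}(T)\subset P_k(T)$ and $P_{r_1}(e)\subset P_p(e)$ are exactly what makes the argument go through, and no genuine analytic difficulty arises beyond this degree-counting check.
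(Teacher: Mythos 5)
Your proofs of \eqref{pro} and \eqref{pro2} are correct and coincide with the paper's argument: apply the integration-by-parts form \eqref{2.4new} (resp.\ \eqref{2.3new}) to the weak function $\{u,u|_{\partial T}\}$, observe that the boundary term vanishes, and invoke the defining property of the $L^2$ projection; since both sides lie in $[P_{r_1}(T)]^2$ (resp.\ the symmetric matrix space), testing against all such polynomials gives equality. The paper proves \eqref{pro} exactly this way and dismisses the other two identities as ``analogous,'' so for \eqref{pro3} your more explicit route through the original definition \eqref{2.4} and the orthogonality of $Q_0$ and $Q_b$ is the right --- indeed the standard --- strategy, and is more informative than what the paper records.

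However, your degree bookkeeping for \eqref{pro3} is wrong in a way that matters. To remove $Q_0$ from $(Q_0u,\nabla\cdot\bq)_T$ you need $\nabla\cdot\bq\in P_k(T)$, i.e.\ $r_1-1\le k$; you instead cite ``$r_1\ge k$'' from the Remark, which is the opposite inequality and does not yield the inclusion $P_{r_1-1}(T)\subset P_k(T)$. Under the Remark's convention $r_1=N+k-1$ (so that $r_1-1=N+k-2\ge k+1$ for $N\ge 3$), that inclusion fails outright; likewise $P_{r_1}(e)\subset P_p(e)$ requires $r_1\le p$, which fails whenever $p=k<r_1$. In that regime the residual terms $-(Q_0u-u,\nabla\cdot\bq)_T+\langle Q_bu-u,\bq\cdot\bn\rangle_{\partial T}$ do not vanish and \eqref{pro3} is not obtained by this argument. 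The identity does go through for the parameter choices the paper actually uses for the $w$-variable in the numerical section, namely $r_1=k=p$ (so $r_1-1\le k$ and $r_1\le p$ both hold), and your proof is complete in that case; but you should state the conditions $r_1\le k+1$ and $r_1\le p$ explicitly rather than appeal to the Remark, which if taken literally would invalidate precisely the step you identify as the crux.
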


\begin{proof} Let $u \in H^1(T)$. By the definition in \eqref{2.4new}, we have
 \begin{equation*} 
  \begin{split}
 &(\nabla_{w}u, \bw)_T\\
  =& (\nabla u,  \bw)_T+
  \langle u|_{\partial T}-u|_T, \bw   \cdot\bn \rangle_{\partial T} \\
  =& (\nabla u,  \bw)_T=({\cal Q}_{r_1}(\nabla u),  \bw)_T,
   \end{split}
   \end{equation*} 
  for any $\bw\in [P_{r_1}(T)]^2$.  
  
  The results in \eqref{pro2} and \eqref{pro3} follow from analogous arguments.

This completes the proof.
  \end{proof}

Let $w$ and $\btheta$ denote the exact solution of the Reissner–Mindlin plate model \eqref{m1}, and let $w_h \in W_{h}$ and $\btheta_h \in \Theta_h$ be the corresponding numerical approximations obtained from the Weak Galerkin scheme \ref{PDWG1}.  
For convenience, introduce the shear stress
 $\bgamma=\lambda t^{-2} (\nabla w-\btheta)$ and its discrete approximation $\bgamma_h=\lambda  t^{-2} (\nabla_w w_h-{\cal Q}_{r_1}\btheta_0)$.

We define the error functions associated with the shear stress and the rotation as
\begin{equation}\label{error} 
e_{\bgamma_h}=\bgamma-\bgamma_h.
\end{equation}
\begin{equation}\label{error2}
e_{\btheta_h}=\btheta-\btheta_h.
\end{equation}

\begin{lemma}\label{errorequa}
The error functions $e_{\bgamma_h}$ and $e_{\btheta_h}$ defined in \eqref{error}–\eqref{error2} satisfy the following error equation:
 \begin{equation}\label{erroreqn}
    \begin{split}
    &\sum_{T\in {\cal T}_h} ({\cal C} \epsilon_w (e_{\btheta_h}), \epsilon_w (\beta))_T  +(  e_{\bgamma_h}, \nabla_w v- {\cal Q}_{r_1} \beta_0)_T-s(w_h, v)\\=&  \sum_{T\in {\cal T}_h}  \langle {\cal C}(\epsilon (\btheta)-   {\cal Q}_{r_2} (\epsilon  \btheta))\cdot\bn, \beta_0-\beta_b\rangle_{\partial T}\\
    &+\langle v_0-v_b, (\bgamma -{\cal Q}_{r_1} \bgamma) \cdot \bn\rangle_{\partial T},
    \end{split}
\end{equation} 
for all $\beta \in \Theta_h^0$ and $v \in W_h^0$.
\end{lemma}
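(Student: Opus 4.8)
The plan is to derive the error equation by testing the original PDE \eqref{m1} against the weak Galerkin test functions $v = \{v_0, v_b\} \in W_h^0$ and $\boldsymbol{\beta} = \{\beta_0, \beta_b\} \in \Theta_h^0$, performing element-wise integration by parts, and then subtracting the numerical scheme \eqref{WG}. First I would take the first equation of \eqref{m1}, namely $-\nabla\cdot(\mathcal{C}\epsilon(\btheta)) - \bgamma = 0$, multiply by $\beta_0$, integrate over each $T \in \T_h$, and integrate by parts to obtain $\sum_T (\mathcal{C}\epsilon(\btheta), \epsilon(\beta_0))_T - \langle \mathcal{C}\epsilon(\btheta)\cdot\bn, \beta_0\rangle_{\partial T} - (\bgamma, \beta_0)_T = 0$. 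Since $\beta_b$ is single-valued on interior edges and vanishes on $\partial\Omega$, and $\mathcal{C}\epsilon(\btheta)\cdot\bn$ is continuous across interior edges (by the regularity of the exact solution), I can insert $\beta_b$ for free: $\sum_T \langle \mathcal{C}\epsilon(\btheta)\cdot\bn, \beta_b\rangle_{\partial T} = 0$. This lets me rewrite the boundary term with the jump $\beta_0 - \beta_b$. Similarly, for the second equation $-\nabla\cdot\bgamma = g$, I multiply by $v_0$, integrate by parts over each $T$, and use continuity of $\bgamma\cdot\bn$ together with single-valuedness of $v_b$ to write $\sum_T (\bgamma, \nabla v_0)_T - \langle \bgamma\cdot\bn, v_0 - v_b\rangle_{\partial T} = (g, v_0)$.

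Next I would convert these continuous-form expressions into the weak-operator form appearing in the scheme. Using \eqref{2.3new} with $\btau = \mathcal{Q}_{r_2}(\epsilon\btheta)$ restricted appropriately, together with the commuting property \eqref{pro2} (i.e.\ $\epsilon_w(\bQ_h\btheta) = \mathcal{Q}_{r_2}(\epsilon\btheta)$ — the vector analogue of \eqref{pro3}), I would relate $(\mathcal{C}\epsilon(\btheta), \epsilon(\beta_0))_T$ to $(\mathcal{C}\epsilon_w(\bQ_h\btheta), \epsilon_w(\beta))_T$ modulo a boundary term involving $\epsilon(\btheta) - \mathcal{Q}_{r_2}(\epsilon\btheta)$ paired with $\beta_0 - \beta_b$; this produces the first term on the right-hand side of \eqref{erroreqn}. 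Likewise, using \eqref{2.4new} and \eqref{pro3}, I would rewrite $(\bgamma, \nabla v_0)_T$ and the term $(\bgamma, \beta_0)_T$ in terms of $(\bgamma, \nabla_w v - \mathcal{Q}_{r_1}\beta_0)_T$ up to a boundary remainder $\langle v_0 - v_b, (\bgamma - \mathcal{Q}_{r_1}\bgamma)\cdot\bn\rangle_{\partial T}$, which gives the second right-hand term. Combining the two tested identities yields
\[
\sum_{T\in\T_h} (\mathcal{C}\epsilon_w(\bQ_h\btheta), \epsilon_w(\beta))_T + (\bgamma, \nabla_w v - \mathcal{Q}_{r_1}\beta_0)_T = (g, v_0) + \text{(boundary remainders)}.
\]
Subtracting the scheme \eqref{WG}, and noting that $\bgamma_h = \lambda t^{-2}(\nabla_w w_h - \mathcal{Q}_{r_1}\btheta_0)$ so that $\lambda t^{-2} b((w_h,\btheta_0),(v,\beta_0)) = (\bgamma_h, \nabla_w v - \mathcal{Q}_{r_1}\beta_0)$, the difference of the second terms collapses to $(e_{\bgamma_h}, \nabla_w v - \mathcal{Q}_{r_1}\beta_0)$ with $e_{\bgamma_h} = \bgamma - \bgamma_h$, and the difference of the first terms is $\sum_T (\mathcal{C}\epsilon_w(e_{\btheta_h}), \epsilon_w(\beta))_T$ once we identify $\epsilon_w(\bQ_h\btheta) - \epsilon_w(\btheta_h) = \epsilon_w(e_{\btheta_h})$ (this is where the error is measured against the projection, consistent with the definitions \eqref{error2}). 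The $-s(w_h,v)$ term survives from subtracting the scheme since the exact solution contributes nothing to the stabilizer.

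The main obstacle I anticipate is the careful bookkeeping of the boundary terms: ensuring that the edge integrals assemble correctly across the partition, that the continuity of $\mathcal{C}\epsilon(\btheta)\cdot\bn$ and $\bgamma\cdot\bn$ is legitimately invoked (requiring the exact solution to be regular enough, e.g.\ $\btheta \in [H^2(\Omega)]^2$ locally so these traces make sense), and that the single-valued quantities $v_b, \beta_b$ are inserted with the right signs so the jumps $\beta_0 - \beta_b$ and $v_0 - v_b$ appear with the orientation matching \eqref{erroreqn}. A secondary subtlety is tracking exactly which $L^2$ projection ($\mathcal{Q}_{r_1}$, $\mathcal{Q}_{r_2}$, $Q_0$, $\bQ_0$) appears in each remainder and confirming via Lemma \ref{Lemma5.1} that the commuting relations \eqref{pro}--\eqref{pro3} eliminate all terms except the two boundary remainders on the right-hand side; everything else must cancel exactly, and verifying this cancellation is the crux of the argument.
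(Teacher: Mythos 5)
Your proposal follows essentially the same route as the paper: test the strong form against $\beta_0$ and $v_0$, integrate by parts element-wise and insert the single-valued traces $\beta_b$, $v_b$, convert to weak gradients via \eqref{2.3new}, \eqref{2.4new} and the identities of Lemma \ref{Lemma5.1} to produce exactly the two boundary remainders, and subtract the scheme \eqref{WG}. The only cosmetic difference is that you route the rotation term through $\epsilon_w(\bQ_h\btheta)$ and a claimed commuting property of the projection $\bQ_h$, whereas the paper works directly with $\epsilon_w\btheta={\cal Q}_{r_2}(\epsilon\btheta)$ (which is what \eqref{pro2} actually states and is all that is needed, since $e_{\btheta_h}=\btheta-\btheta_h$), thereby avoiding any commuting assumption on $\bQ_h$.
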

\begin{proof}   
Testing the first equation in \eqref{m1} with $\beta_0$ of $\beta = \{\beta_0, \beta_b\} \in \Theta_h^0$ yields
\begin{equation}\label{eee}
 \sum_{T\in {\cal T}_h} ({\cal C} \epsilon (\btheta), \epsilon (\beta_0))_T-\langle {\cal C}\epsilon (\btheta)\cdot\bn, \beta_0-\beta_b\rangle_{\partial T} -(\bgamma, \beta_0)_T=0,   
\end{equation}
where we have used the usual integration by parts and the fact that $\sum_{T\in {\cal T}_h} \langle {\cal C}\epsilon (\btheta)\cdot\bn,  \beta_b\rangle_{\partial T}=\langle {\cal C}\epsilon (\btheta)\cdot\bn,  \beta_b\rangle_{\partial \Omega}=0$ since $\beta_b=0$ on $\partial\Omega$.  

For the term $\sum_{T\in {\cal T}_h} ({\cal C} \epsilon (\btheta), \epsilon (\beta_0))_T$, using \eqref{pro2} and \eqref{2.3new} we obtain 
\begin{equation}\label{eee1}
    \begin{split}
    ({\cal C} \epsilon_w \btheta,\epsilon_w \beta)_T=&({\cal C} {\cal Q}_{r_2} (\epsilon  \btheta), \epsilon_w \beta)_T\\
    =& (\epsilon(\beta_0), {\cal C} {\cal Q}_{r_2} (\epsilon  \btheta))_T-\langle \beta_0-\beta_b, {\cal C} {\cal Q}_{r_2} (\epsilon  \btheta))\cdot\bn \rangle_{\partial T}\\
     =& (\epsilon(\beta_0),  {\cal C}\epsilon ( \btheta) )_T-\langle \beta_0-\beta_b, {\cal C} {\cal Q}_{r_2} (\epsilon  \btheta))\cdot\bn \rangle_{\partial T}.
    \end{split}
\end{equation}

Substituting \eqref{eee1} into \eqref{eee} yields
\begin{equation}\label{eee4}
    \sum_{T\in {\cal T}_h} ({\cal C} \epsilon_w (\btheta), \epsilon_w (\beta))_T  -(\bgamma, \beta_0)_T= \sum_{T\in {\cal T}_h}\langle {\cal C}(\epsilon (\btheta)-   {\cal Q}_{r_2} (\epsilon  \btheta))\cdot\bn, \beta_0-\beta_b\rangle_{\partial T}. 
\end{equation}

Next, testing the second equation in \eqref{m1} with $v_0$ of $v = \{v_0, v_b\} \in W_h^0$ gives
\begin{equation}\label{eee2}
   \sum_{T\in {\cal T}_h} -(\nabla \cdot \bgamma, v_0)_T=\sum_{T\in {\cal T}_h} (\bgamma, \nabla v_0)_T-\langle v_0-v_b, \bgamma \cdot \bn\rangle_{\partial T} =\sum_{T\in {\cal T}_h}(g, v_0)_T,
\end{equation}
where we used the usual integration by parts and  the fact that $\sum_{T\in {\cal T}_h} \langle v_b, \bgamma\cdot\bn\rangle_{\partial T} =\langle v_b, \bgamma\cdot\bn\rangle_{\partial \Omega}=0$ since $v_b=0$ on $\partial \Omega$.

From \eqref{2.4new}, we have
\begin{equation}\label{eee3}
    \begin{split}
(\bgamma, \nabla v_0)_T =&({\cal Q}_{r_1} \bgamma,  \nabla v_0)_T\\
=&({\cal Q}_{r_1} \bgamma, \nabla_w v)_T+\langle v_0-v_b, {\cal Q}_{r_1} \bgamma \cdot\bn\rangle_{\partial T}\\
=&(  \bgamma, \nabla_w v)_T+\langle v_0-v_b, {\cal Q}_{r_1} \bgamma \cdot\bn\rangle_{\partial T}.
    \end{split}
\end{equation} 

Substituting \eqref{eee3} into \eqref{eee2} yields
\begin{equation}\label{eee5}
    \sum_{T\in {\cal T}_h} (  \bgamma, \nabla_w v)_T= \sum_{T\in {\cal T}_h} \langle v_0-v_b, (\bgamma -{\cal Q}_{r_1} \bgamma) \cdot \bn\rangle_{\partial T} +(g, v_0)_T.
\end{equation}

Adding \eqref{eee4}
 and \eqref{eee5} gives
 \begin{equation*} 
    \begin{split}
    &\sum_{T\in {\cal T}_h} ({\cal C} \epsilon_w (\btheta), \epsilon_w (\beta))_T  +(  \bgamma, \nabla_w v-\beta_0)_T\\=&  \sum_{T\in {\cal T}_h}  \langle {\cal C}(\epsilon (\btheta)-   {\cal Q}_{r_2} (\epsilon  \btheta))\cdot\bn, \beta_0-\beta_b\rangle_{\partial T}\\
    &+\langle v_0-v_b, (\bgamma -{\cal Q}_{r_1} \bgamma) \cdot \bn\rangle_{\partial T} +(g, v_0)_T.
    \end{split}
\end{equation*} 

Using the property of  ${\cal Q}_{r_1}$ yields
\begin{equation*} 
    \begin{split}
    &\sum_{T\in {\cal T}_h} ({\cal C} \epsilon_w (\btheta), \epsilon_w (\beta))_T  +( {\cal Q}_{r_1} \bgamma, \nabla_w v- {\cal Q}_{r_1} \beta_0)_T\\=&  \sum_{T\in {\cal T}_h}  \langle {\cal C}(\epsilon (\btheta)-   {\cal Q}_{r_2} (\epsilon  \btheta))\cdot\bn, \beta_0-\beta_b\rangle_{\partial T}\\
    &+\langle v_0-v_b, (\bgamma -{\cal Q}_{r_1} \bgamma) \cdot \bn\rangle_{\partial T}  +(g, v_0)_T,
    \end{split}
\end{equation*} 
and further gives
 \begin{equation}\label{eee8}
    \begin{split}
    &\sum_{T\in {\cal T}_h} ({\cal C} \epsilon_w (\btheta), \epsilon_w (\beta))_T  +(   \bgamma, \nabla_w v- {\cal Q}_{r_1} \beta_0)_T-s(w_h, v)\\=&  \sum_{T\in {\cal T}_h}  \langle {\cal C}(\epsilon (\btheta)-   {\cal Q}_{r_2} (\epsilon  \btheta))\cdot\bn, \beta_0-\beta_b\rangle_{\partial T}\\
    &+\langle v_0-v_b, (\bgamma -{\cal Q}_{r_1} \bgamma) \cdot \bn\rangle_{\partial T}  +(g, v_0)_T.
    \end{split}
\end{equation}

 Finally, subtracting \eqref{WG} from \eqref{eee8} yields the desired result \eqref{erroreqn}, completing the proof.

\end{proof}

\section{Error Estimates}
  In this section, we establish an error estimate  in a discrete $H^1$-norm for the WG solutions of WG Scheme~\ref{PDWG1}.

\begin{lemma}\label{w}
Let $w$ be the exact solution of the RM plate model \eqref{m1}, and suppose $w \in H^{k+1}(\Omega)$. Then there exists a constant $C>0$ such that
\begin{equation}\label{erroresti1}
\3bar w-Q_hw \3bar_{W_h} \leq Ch^{k}\|w\|_{k+1}.
\end{equation}
\end{lemma}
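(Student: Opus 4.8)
The plan is to bound the triple-bar norm by the discrete $H^1$ semi-norm via Lemma~\ref{normeqva}, so that it suffices to control $\|w-Q_hw\|_{1,h}$, which in turn reduces to standard approximation estimates for the $L^2$ projections $Q_0$ and $Q_b$. Concretely, by \eqref{normeq} there is a constant $C_2$ with
\begin{equation*}
\3bar w-Q_hw\3bar_{W_h}\le C_2\|w-Q_hw\|_{1,h}
=C_2\Big(\sum_{T\in{\cal T}_h}\|\nabla(w-Q_0w)\|_T^2+h_T^{-1}\|(w-Q_0w)-(w-Q_bw)\|_{\partial T}^2\Big)^{1/2},
\end{equation*}
where on $\partial T$ the boundary term is $\|(Q_bw-w)-(Q_0w-w)\|_{\partial T}=\|Q_b(w-Q_0w)\|_{\partial T}\le\|w-Q_0w\|_{\partial T}$ since $Q_b$ is an $L^2$-orthogonal projection (hence a contraction) and $Q_bw=Q_b(Q_0w)$ is irrelevant up to adding/subtracting $w$; more carefully I would write $w_0-w_b=(w-w_b)-(w-w_0)=(Q_0w-w)-(Q_bw-w)$ and estimate each piece on $\partial T$.

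The first step is then to invoke the standard polynomial approximation properties of the $L^2$ projection on a shape-regular element: for $w\in H^{k+1}(T)$,
\begin{equation*}
\|w-Q_0w\|_T+h_T\|\nabla(w-Q_0w)\|_T\le Ch_T^{k+1}\|w\|_{k+1,T},
\end{equation*}
which gives $\|\nabla(w-Q_0w)\|_T\le Ch_T^{k}\|w\|_{k+1,T}$. The second step handles the boundary terms: using the trace inequality \eqref{tracein} applied to $\phi=w-Q_0w$ (or to $w-Q_bw$ after an analogous edgewise estimate), together with the same approximation bound,
\begin{equation*}
h_T^{-1}\|w-Q_0w\|_{\partial T}^2\le Ch_T^{-1}\big(h_T^{-1}\|w-Q_0w\|_T^2+h_T\|\nabla(w-Q_0w)\|_T^2\big)\le Ch_T^{2k}\|w\|_{k+1,T}^2,
\end{equation*}
and similarly for the $w_b$ part using the approximation property of $Q_b$ on each edge. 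The third step is to sum over $T\in{\cal T}_h$, obtaining $\|w-Q_hw\|_{1,h}^2\le C\sum_T h_T^{2k}\|w\|_{k+1,T}^2\le Ch^{2k}\|w\|_{k+1}^2$, and take square roots.

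I do not anticipate a serious obstacle here; the only point requiring a little care is the bookkeeping around the boundary contribution $w_0-w_b$, making sure it is rewritten in terms of $(w-Q_0w)$ and $(w-Q_bw)$ before applying trace and approximation inequalities, and confirming that the choice of the space $W(k,p,T)$ (i.e.\ $p\ge k$, or at least that $Q_b$ reproduces polynomials of the needed degree on edges) does not degrade the rate — since $w_b$ is the $L^2$ projection of the trace onto $P_p(e)$ with $p\ge 1$, the edge approximation error is $O(h_T^{\min(p,k)+1/2})$ in $L^2(e)$, which combined with the $h_T^{-1/2}$ weight yields at least $O(h_T^{k})$ provided $p\ge k$; I would either assume $p\ge k$ explicitly or note that the interior term $\nabla(w-Q_0w)$ already dictates the $h^k$ rate and the boundary term is of the same or higher order. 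All constants depend only on shape-regularity and $k$, as required.
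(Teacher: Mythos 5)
Your overall strategy --- reduce the triple-bar norm to the discrete $H^1$ semi-norm and then apply standard $L^2$-projection approximation estimates together with the trace inequality \eqref{tracein} --- is in substance the same computation the paper performs, and your handling of the interior and boundary contributions (including the observation that one needs $p\ge k$, or at least that $Q_b$ reproduces the trace of $Q_0w$ on each edge, a point the paper glosses over) is correct. The one step that does not go through as written is the very first one: Lemma~\ref{normeqva} is stated, and proved in the cited reference, only for functions in the discrete space $W_h$, whose first component is a piecewise polynomial in $P_k(T)$. The function $w-Q_hw=\{w-Q_0w,\ w-Q_bw\}$ is not in $W_h$, since $w-Q_0w$ is not a polynomial, so you cannot simply cite \eqref{normeq} for it.

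The gap is repairable, and the repair is essentially the paper's proof. The upper-bound half of the equivalence does hold for any weak function whose first component lies in $H^1(T)$: in the identity $(\nabla_w v,\bq)_T=(\nabla v_0,\bq)_T+\langle v_b-v_0,\bq\cdot\bn\rangle_{\partial T}$ the polynomial trace inequality \eqref{trace} is applied to the test function $\bq\in[P_{r_1}(T)]^2$, which is always a polynomial, and not to $v_0$; choosing $\bq=\nabla_w(w-Q_hw)$ then yields $\3bar w-Q_hw\3bar_{W_h}\le C\|w-Q_hw\|_{1,h}$ directly. The paper avoids the issue by never invoking the equivalence lemma: it tests $\nabla_w(w-Q_hw)$ against an arbitrary $\bq$, bounds the interior and boundary terms exactly as you do, and only at the end sets $\bq=\nabla_w(w-Q_hw)$. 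If you replace your appeal to Lemma~\ref{normeqva} by this three-line argument (or by the paper's direct duality computation), the remainder of your proof is sound and delivers the stated $O(h^k)$ bound with constants depending only on shape regularity and $k$.
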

\begin{proof}

Using \eqref{2.4new}, the Cauchy-Schwarz inequality, and the trace inequalities \eqref{tracein}--\eqref{trace}, for any $\bq \in [P_{r_1}(T)]^2$ we have

\begin{equation*}
\begin{split}
&\sum_{T\in {\cal T}_h} (\nabla_{w}(w-Q_hw),  \bq)_T\\
 = &\sum_{T\in {\cal T}_h} (\nabla(w-Q_0w),  \bq)_T+
  \langle Q_0w-Q_bw,  \bq \cdot\bn \rangle_{\partial T}\\
\leq &\Big(\sum_{T\in {\cal T}_h} \|\nabla(w-Q_0w)\|^2_T\Big)^{\frac{1}{2}} \Big(\sum_{T\in {\cal T}_h} \|\bq\|_T^2\Big)^{\frac{1}{2}}\\&
 + \Big(\sum_{T\in {\cal T}_h} \| Q_0w-Q_bw\|_{\partial T} ^2\Big)^{\frac{1}{2}}\Big(\sum_{T\in {\cal T}_h}  \|  \bq \cdot\bn \|_{\partial T}^2\Big)^{\frac{1}{2}}\\
\leq &\Big(\sum_{T\in {\cal T}_h} \|
\nabla (w-Q_0w)\|^2_T\Big)^{\frac{1}{2}} \Big(\sum_{T\in {\cal T}_h} \|\bq\|_T^2\Big)^{\frac{1}{2}}\\&
 + \Big(\sum_{T\in {\cal T}_h} h_T^{-1}\| Q_0w-w\|_{T} ^2+h_T\| Q_0w-w\|_{1, T} ^2\Big)^{\frac{1}{2}}\Big(\sum_{T\in {\cal T}_h} h_T^{-1} \|\bq\|_{T}^2\Big)^{\frac{1}{2}}\\
&\leq Ch^{k}\|w\|_{k+1}\Big(\sum_{T\in {\cal T}_h} \|\bq\|_T^2\Big)^{\frac{1}{2}}.
\end{split}
\end{equation*}
Choosing $\bq=\nabla_{w}(w-Q_hw)$ yields
$$
\sum_{T\in {\cal T}_h} (\nabla_{w}(w-Q_hw), \nabla_{w}(w-Q_hw))_T\leq 
 Ch^{k}\|w\|_{k+1}\3bar w-Q_hw \3bar_{W_h},
 $$  
 which proves the claim.
\end{proof}

\begin{lemma}\label{theta}
Let $\btheta$ be the exact solution of the RM plate model \eqref{m1}, and suppose $\btheta \in [H^{k+1}(\Omega)]^2$. Then there exists a constant $C>0$ such that
\begin{equation}\label{erroresti2}
\3bar \btheta-\bQ_h  \btheta \3bar_{\Theta_h} \leq Ch^{k}\|\btheta\|_{k+1}.
\end{equation}
\end{lemma}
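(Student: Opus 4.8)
The plan is to mimic the proof of Lemma~\ref{w}, replacing the discrete weak gradient by the discrete weak symmetric gradient and the scalar projections $Q_0,Q_b$ by their vector analogues $\bQ_0,\bQ_b$. Starting from the identity \eqref{2.3new} applied to $\beta=\btheta-\bQ_h\btheta=\{\btheta-\bQ_0\btheta,\ \btheta-\bQ_b\btheta\}$, for any $\btau\in[P_{r_2}(T)]^{2\times2}_{\mathrm{sym}}$ one has
\begin{equation*}
(\epsilon_w(\btheta-\bQ_h\btheta),\btau)_T=(\epsilon(\btheta-\bQ_0\btheta),\btau)_T+\langle \bQ_0\btheta-\bQ_b\btheta,\ \btau\cdot\bn\rangle_{\partial T},
\end{equation*}
where I use $\bQ_b\btheta=\bQ_b(\btheta|_{\partial T})$ and that $\btheta|_{\partial T}$ appearing via $\bQ_0\btheta-\btheta$ integrates against $\btau\cdot\bn$.

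Then I would sum over $T\in\T_h$, apply Cauchy--Schwarz on both the volume term and the boundary term, and bound the boundary factor $\|\bQ_0\btheta-\bQ_b\btheta\|_{\partial T}\le\|\bQ_0\btheta-\btheta\|_{\partial T}+\|\btheta-\bQ_b\btheta\|_{\partial T}$, using that $\bQ_b$ is the $L^2$ projection on each edge so $\|\btheta-\bQ_b\btheta\|_{\partial T}\le\|\btheta-\bQ_0\btheta\|_{\partial T}$. The trace inequality \eqref{tracein} gives
\begin{equation*}
\|\bQ_0\btheta-\btheta\|_{\partial T}^2\le C\big(h_T^{-1}\|\bQ_0\btheta-\btheta\|_T^2+h_T\|\bQ_0\btheta-\btheta\|_{1,T}^2\big),
\end{equation*}
while the polynomial trace inequality \eqref{trace} handles $\|\btau\cdot\bn\|_{\partial T}^2\le Ch_T^{-1}\|\btau\|_T^2$. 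Standard approximation properties of the $L^2$ projection $\bQ_0$ onto $[P_q(T)]^2$ (with $q\ge k$, which holds since the weak function interior degree is chosen at least $k$) yield $\|\btheta-\bQ_0\btheta\|_T+h_T\|\btheta-\bQ_0\btheta\|_{1,T}\le Ch_T^{k+1}|\btheta|_{k+1,T}$ and $\|\epsilon(\btheta-\bQ_0\btheta)\|_T\le Ch_T^{k}|\btheta|_{k+1,T}$. Combining these, the right-hand side is bounded by $Ch^{k}\|\btheta\|_{k+1}\big(\sum_T\|\btau\|_T^2\big)^{1/2}$.

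Finally I would choose $\btau=\epsilon_w(\btheta-\bQ_h\btheta)$, which is admissible since $\epsilon_w(\btheta-\bQ_h\btheta)|_T\in[P_{r_2}(T)]^{2\times2}_{\mathrm{sym}}$, giving
\begin{equation*}
\3bar\btheta-\bQ_h\btheta\3bar_{\Theta_h}^2=\sum_{T\in\T_h}(\epsilon_w(\btheta-\bQ_h\btheta),\epsilon_w(\btheta-\bQ_h\btheta))_T\le Ch^{k}\|\btheta\|_{k+1}\,\3bar\btheta-\bQ_h\btheta\3bar_{\Theta_h},
\end{equation*}
and dividing through by $\3bar\btheta-\bQ_h\btheta\3bar_{\Theta_h}$ completes the proof. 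I do not anticipate a serious obstacle here; the only point requiring minor care is making sure the polynomial degree $q$ of $\beta_0$ is large enough (at least $k$) for the approximation estimate to deliver the full $h^k$ rate, and keeping the interior/boundary projection bookkeeping consistent so that the trace terms telescope correctly.
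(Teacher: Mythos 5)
Your proposal is correct and follows essentially the same route as the paper: apply \eqref{2.3new} to $\btheta-\bQ_h\btheta$, use Cauchy--Schwarz together with the trace inequalities \eqref{tracein}--\eqref{trace} and the approximation properties of $\bQ_0$, and then take $\btau=\epsilon_w(\btheta-\bQ_h\btheta)$. The only difference is that you spell out how the $\bQ_b$ term is absorbed via the edgewise $L^2$-projection bound, a step the paper leaves implicit.
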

\begin{proof} 
Using \eqref{2.3new}, the Cauchy-Schwarz inequality, and the trace inequalities \eqref{tracein}--\eqref{trace}, for any $\beta \in [P_{r_2}(T)]^{2\times 2}$ we have
\begin{equation*}
\begin{split}
&\sum_{T\in {\cal T}_h} (\epsilon_{w}( \btheta-\bQ_h\btheta),  \beta)_T\\
 = &\sum_{T\in {\cal T}_h} (\epsilon(\btheta-\bQ_0\btheta),  \beta)_T+
  \langle \bQ_0 \btheta-\bQ_b \btheta,  \beta \cdot\bn \rangle_{\partial T}\\
\leq &\Big(\sum_{T\in {\cal T}_h} \|\epsilon(\btheta-\bQ_0\btheta)\|^2_T\Big)^{\frac{1}{2}} \Big(\sum_{T\in {\cal T}_h} \|\beta\|_T^2\Big)^{\frac{1}{2}}\\&
 + \Big(\sum_{T\in {\cal T}_h} \|\bQ_0 \btheta-\bQ_b \btheta\|_{\partial T} ^2\Big)^{\frac{1}{2}}\Big(\sum_{T\in {\cal T}_h}  \|  \beta \cdot\bn \|_{\partial T}^2\Big)^{\frac{1}{2}}\\
\leq &\Big(\sum_{T\in {\cal T}_h} \|
\epsilon(\btheta-\bQ_0\btheta)\|^2_T\Big)^{\frac{1}{2}} \Big(\sum_{T\in {\cal T}_h} \|\beta\|_T^2\Big)^{\frac{1}{2}}\\&
 + \Big(\sum_{T\in {\cal T}_h} h_T^{-1}\|\bQ_0 \btheta-  \btheta\|_{T} ^2+h_T\| \bQ_0 \btheta- \btheta\|_{1, T} ^2\Big)^{\frac{1}{2}}\Big(\sum_{T\in {\cal T}_h} h_T^{-1} \|\beta\|_{T}^2\Big)^{\frac{1}{2}}\\
&\leq Ch^{k}\|\btheta\|_{k+1}\Big(\sum_{T\in {\cal T}_h} \|\beta\|_T^2\Big)^{\frac{1}{2}}.
\end{split}
\end{equation*}
Letting $\beta=\epsilon_{w}( \btheta-\bQ_h\btheta)$ gives 
$$
\sum_{T\in {\cal T}_h} (\epsilon_{w}( \btheta-\bQ_h\btheta), \epsilon_{w}( \btheta-\bQ_h\btheta))_T\leq 
 Ch^{k}\|\btheta\|_{k+1}\3bar \btheta-\bQ_h\btheta \3bar_{\Theta_h},
 $$  
which completes the proof.
  
\end{proof}

  \begin{lemma}\cite{
  RM, mu}\label{beta}
     For any $\beta_h=\{\beta_0, \beta_b\}\in \Theta_h$, there holds  
 \begin{equation*} 
      (\sum_{T\in {\cal T}_h}\|\beta_0\|_T^2)^{\frac{1}{2}}\leq C\| \beta\|_{1, h}.
  \end{equation*}  
    
  \end{lemma}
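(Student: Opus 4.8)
Since the right-hand side $\|\beta\|_{1,h}$ only involves $\epsilon\beta_0$ and the boundary discrepancies $h_T^{-1/2}\|\beta_0-\beta_b\|_{\partial T}$, this is a discrete Korn--Poincar\'e estimate and must be read for $\beta=\{\beta_0,\beta_b\}$ with $\beta_b=0$ on $\partial\Omega$ (the homogeneous boundary condition is essential, since a nonzero constant field has $\|\beta\|_{1,h}=0$). The plan is to transfer the $L^2$ bound on the piecewise polynomial $\beta_0$ to a globally conforming companion field in $[H_0^1(\Omega)]^2$, where the classical Poincar\'e and Korn inequalities apply, and to absorb the discrepancy into the boundary terms already present in $\|\beta\|_{1,h}$.

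First I would fix, on each polytope $T\in{\cal T}_h$, an auxiliary shape-regular simplicial subdivision (the standard device for polytopal WG analysis), giving a conforming Lagrange space $V_h^c\subset[H_0^1(\Omega)]^2$ on the refined mesh. Next I would introduce an enriching operator $I_h:\beta_0\mapsto I_h\beta_0\in V_h^c$ of Oswald/Scott--Zhang type, obtained by averaging nodal values of $\beta_0$ across elements and zeroing the boundary nodal values. Its key local estimate is
\begin{equation*}
\|\beta_0-I_h\beta_0\|_T^2+h_T^2\|\nabla(\beta_0-I_h\beta_0)\|_T^2\le C\sum_{e\subset\partial T}h_e\,\|\ljump\beta_0\rjump\|_e^2,
\end{equation*}
where $\ljump\beta_0\rjump$ is the usual jump on interior edges and the trace of $\beta_0$ on boundary edges. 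Because $\beta_b$ is single-valued on each interior edge and vanishes on $\partial\Omega$, the triangle inequality gives $\|\ljump\beta_0\rjump\|_e\le\|\beta_0-\beta_b\|_e$ summed over the one or two elements meeting $e$; with shape regularity ($h_e\simeq h_T$) this yields $\sum_e h_e\|\ljump\beta_0\rjump\|_e^2\le C\sum_T h_T\|\beta_0-\beta_b\|_{\partial T}^2$.

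Then I would split $\|\beta_0\|_{L^2(\Omega)}\le\|\beta_0-I_h\beta_0\|_{L^2(\Omega)}+\|I_h\beta_0\|_{L^2(\Omega)}$. The first term is controlled by the displayed estimate together with $\sum_T h_T\|\beta_0-\beta_b\|_{\partial T}^2\le h^2\sum_T h_T^{-1}\|\beta_0-\beta_b\|_{\partial T}^2$, hence by $Ch\|\beta\|_{1,h}$. For the second term, since $I_h\beta_0\in[H_0^1(\Omega)]^2$, Poincar\'e and Korn's first inequality give $\|I_h\beta_0\|_{L^2(\Omega)}\le C\|\nabla(I_h\beta_0)\|_{L^2(\Omega)}\le C\|\epsilon(I_h\beta_0)\|_{L^2(\Omega)}$; writing $\epsilon(I_h\beta_0)=\epsilon(I_h\beta_0-\beta_0)+\epsilon(\beta_0)$ elementwise and using the displayed estimate once more (now with the $h_T^{-1}$-weighted gradient bound obtained after dividing by $h_T^2$) bounds this by $C\|\beta\|_{1,h}$. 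Adding the two contributions gives the claim.

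The main obstacle is the enriching-operator estimate on general non-convex polytopal meshes: one must justify that the simplicial refinement may be taken shape-regular and that the averaged interpolant satisfies the stated jump-controlled approximation bound in this setting, which is exactly where the arguments of \cite{RM, mu} are invoked; the underlying broken Korn inequality for piecewise polynomial vector fields (of Brenner type) is the conceptual core, and the remaining steps are only Cauchy--Schwarz, trace/inverse inequalities, and shape regularity.
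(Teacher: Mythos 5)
The paper offers no proof of this lemma at all---it is quoted verbatim from \cite{RM, mu}---so there is nothing in the text to compare against; your write-up is judged on its own merits, and it is essentially sound. Your opening observation is a genuine (and correct) catch: as printed, the lemma is stated for all of $\Theta_h$, yet a nonzero constant field (or a rigid rotation) with $\beta_b$ equal to the trace of $\beta_0$ has $\|\beta\|_{1,h}=0$, so the estimate can only hold on $\Theta_h^0$; this is consistent with how the lemma is actually invoked in the error analysis, where it is applied to quantities whose boundary component vanishes on $\partial\Omega$. Your route---Oswald/Scott--Zhang averaging onto a conforming Lagrange space over a shape-regular simplicial refinement of each polytope, jump control by $\|\beta_0-\beta_b\|_{\partial T}$ via the single-valuedness of $\beta_b$, then Poincar\'e and Korn's first inequality on $[H_0^1(\Omega)]^2$ applied to the companion field---is the Brenner-style broken-Korn argument, and the reduction you give (bounding $\|\epsilon(I_h\beta_0)\|$ by $\|\epsilon\beta_0\|$ plus the weighted jumps, rather than invoking a full piecewise Korn inequality) closes correctly. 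The one ingredient you rightly flag as nontrivial, the jump-controlled approximation estimate for the averaging operator on non-convex polytopal elements, is standard once the auxiliary subdivision is assumed shape-regular in the sense of \cite{wy3655}. For comparison, the usual device in the WG literature for inequalities of this type is shorter and avoids any enrichment operator: construct $\bsigma\in[H^1(\Omega)]^{2\times 2}_{sym}$ with $\nabla\cdot\bsigma=\beta_0$ and $\|\bsigma\|_1\leq C\|\beta_0\|$, write $\|\beta_0\|^2=\sum_{T}\big(-(\epsilon(\beta_0),\bsigma)_T+\langle \beta_0-\beta_b,\bsigma\cdot\bn\rangle_{\partial T}\big)$ using the single-valuedness of $\beta_b$ and its vanishing on $\partial\Omega$, and conclude by Cauchy--Schwarz and the trace inequality \eqref{tracein}. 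That duality route leans on a continuous-level surjectivity result; yours is self-contained at the discrete level but needs the companion-operator machinery. Either is acceptable.
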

\begin{theorem}

Let $(w,\btheta)$ be the exact solution of the RM plate model \eqref{m1}, and assume sufficient regularity such that $w \in H^{k+1}(\Omega)$ and $\btheta \in [H^{k+1}(\Omega)]^2$.
Let $(w_h, \btheta_h) \in W_h \times \Theta_h$ denote the numerical solution obtained from the WG scheme \ref{PDWG1}.
Then, there exists a constant $C>0$, independent of the mesh size $h$, such that the following error estimate holds:
\begin{equation}\label{trinorm}
 \3bar e_{\btheta_h}\3bar_{\Theta_h} +\lambda^{-\frac{1}{2}} t \|e_{\bgamma_h}\| + \3bar w-w_h\3bar_{W_h} \leq Ch^k(\|\bgamma\|_k+\|\btheta\|_{k+1}+\|w\|_{k+1}).
\end{equation}
\end{theorem}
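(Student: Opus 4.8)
The natural strategy is an energy argument built on the error equation \eqref{erroreqn} from Lemma~\ref{errorequa}. First I would introduce the split error functions $\epsilon_{w}(e_{\btheta_h})$ into a projection part and a discrete part: write $\btheta - \btheta_h = (\btheta - \bQ_h\btheta) + (\bQ_h\btheta - \btheta_h)$ and similarly $w - w_h = (w - Q_hw) + (Q_hw - w_h)$, denoting the purely discrete pieces by $\varepsilon_{\btheta} = \bQ_h\btheta - \btheta_h \in \Theta_h^0$ and $\varepsilon_w = Q_hw - w_h \in W_h^0$. Using \eqref{pro}, \eqref{pro2}, \eqref{pro3} I can rewrite the weak gradients of the projections in terms of the $L^2$-projected exact quantities ${\cal Q}_{r_1}(\nabla w)$ and ${\cal Q}_{r_2}(\epsilon\btheta)$, so that the error equation becomes an equation for $(\varepsilon_{\btheta}, \varepsilon_w)$ with a right-hand side consisting of (i) the two boundary terms already appearing in \eqref{erroreqn}, and (ii) interpolation-type terms $({\cal C}\,{\cal Q}_{r_2}(\epsilon\btheta) - {\cal C}\epsilon\btheta, \cdot)$, $(\bgamma - {\cal Q}_{r_1}\bgamma, \cdot)$, and the shear cross-term coming from $\bgamma_h$ versus $\lambda t^{-2}(\nabla_w Q_hw - {\cal Q}_{r_1}\bQ_0\btheta)$.

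Next I would test this discrete error equation with $(\beta, v) = (\varepsilon_{\btheta}, \varepsilon_w)$. The left-hand side produces the coercive quadratic form $({\cal C}\epsilon_w\varepsilon_{\btheta}, \epsilon_w\varepsilon_{\btheta}) + \lambda t^{-2}\|\nabla_w\varepsilon_w - {\cal Q}_{r_1}\varepsilon_{\btheta,0}\|^2 + s(\varepsilon_w,\varepsilon_w)$, which by the $s$-stabilization, the norm equivalences of Lemmas~\ref{normeqva}--\ref{normeqva2}, Lemmas~\ref{norm1}--\ref{norm2}, and the ellipticity of ${\cal C}$ controls $\3bar\varepsilon_{\btheta}\3bar_{\Theta_h}^2 + \lambda t^{-2}\|\nabla_w\varepsilon_w - {\cal Q}_{r_1}\varepsilon_{\btheta,0}\|^2 + s(\varepsilon_w,\varepsilon_w)$ from below — one must be a bit careful here since the quantity naturally controlled is $\nabla_w\varepsilon_w - {\cal Q}_{r_1}\varepsilon_{\btheta,0}$ rather than $\nabla_w\varepsilon_w$ alone, so recovering $\3bar\varepsilon_w\3bar_{W_h}$ uses a triangle inequality together with the already-bounded $\3bar\varepsilon_{\btheta}\3bar_{\Theta_h}$ and Lemma~\ref{beta} to absorb $\|{\cal Q}_{r_1}\varepsilon_{\btheta,0}\|$. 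Each term on the right-hand side I would bound by Cauchy--Schwarz combined with the trace inequalities \eqref{tracein}--\eqref{trace}, standard $L^2$-projection approximation estimates (giving powers $h^k\|\btheta\|_{k+1}$, $h^k\|w\|_{k+1}$, $h^k\|\bgamma\|_k$), and Lemmas~\ref{w}--\ref{theta} for the projection errors in the triple-bar norms; the boundary terms of the form $\langle \beta_0 - \beta_b, (\epsilon\btheta - {\cal Q}_{r_2}\epsilon\btheta)\cdot\bn\rangle_{\partial T}$ are handled by pairing $h_T^{-1/2}\|\beta_0 - \beta_b\|_{\partial T}$ (bounded by $\|\beta\|_{1,h}$) against $h_T^{1/2}\|\epsilon\btheta - {\cal Q}_{r_2}\epsilon\btheta\|_{\partial T}$ (bounded by $h^k\|\btheta\|_{k+1}$ via \eqref{tracein}). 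Collecting terms and using Young's inequality to absorb the factors of $\3bar\varepsilon_{\btheta}\3bar_{\Theta_h}$, $\3bar\varepsilon_w\3bar_{W_h}$ on the right into the left yields the bound on the discrete error. Finally, a triangle inequality with Lemmas~\ref{w} and \ref{theta} converts the bound on $(\varepsilon_{\btheta},\varepsilon_w)$ into the stated bound on $(\3bar e_{\btheta_h}\3bar_{\Theta_h}, \3bar w - w_h\3bar_{W_h})$, and the shear stress estimate $\lambda^{-1/2}t\|e_{\bgamma_h}\|$ follows by writing $e_{\bgamma_h} = \lambda t^{-2}(\nabla w - \btheta - \nabla_w w_h + {\cal Q}_{r_1}\btheta_0)$, inserting the projections, and reusing the already-established bounds together with Lemma~\ref{beta}.

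\textbf{Main obstacle.} The delicate point is the uniformity in the thickness parameter $t$: the coercivity constant for the shear term carries a factor $\lambda t^{-2}$, which is large as $t \to 0$, and one must ensure the right-hand side is matched with the correct weight so that, after dividing through, the constant $C$ in \eqref{trinorm} is genuinely $t$-independent. This requires keeping the shear-related terms grouped with their $\lambda t^{-2}$ (equivalently, estimating $\lambda^{-1/2}t\|e_{\bgamma_h}\|$ rather than $\|e_{\bgamma_h}\|$ directly) throughout the Cauchy--Schwarz/Young steps, and exploiting that the troublesome term $\bgamma - {\cal Q}_{r_1}\bgamma$ appears paired with $v_0 - v_b$ so that its contribution scales like the stabilization norm; the cross-term involving $\nabla_w Q_hw - {\cal Q}_{r_1}(\nabla w)$ vanishes identically by \eqref{pro3}, which is the key cancellation that makes the $t$-uniform estimate possible. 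A secondary technical nuisance is that recovering $\3bar\varepsilon_w\3bar_{W_h}$ (as opposed to the combination $\nabla_w\varepsilon_w - {\cal Q}_{r_1}\varepsilon_{\btheta,0}$) costs a controlled use of $\3bar\varepsilon_{\btheta}\3bar_{\Theta_h}$, so the two estimates must be closed simultaneously rather than sequentially.
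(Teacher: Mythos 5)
Your plan follows essentially the same route as the paper: test the error equation \eqref{erroreqn} with $(\bQ_h\btheta-\btheta_h,\,Q_hw-w_h)$, exploit the identity $\lambda^{-1}t^2({\cal Q}_{r_1}\bgamma-\bgamma_h)=\nabla_w(Q_hw-w_h)-{\cal Q}_{r_1}(\btheta-\btheta_0)$ (the paper's \eqref{s1}, your "key cancellation" via \eqref{pro3}) to produce the coercive form, bound the remaining terms by Cauchy--Schwarz, the trace inequalities, and Lemmas~\ref{w}--\ref{theta}, and finish with a triangle inequality, recovering $\3bar w-w_h\3bar_{W_h}$ from the shear combination exactly as the paper does in its final step. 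The only cosmetic difference is that you keep the quadratic form in the discrete errors $(\bQ_h\btheta-\btheta_h,\,Q_hw-w_h)$ while the paper keeps it in $(e_{\btheta_h},e_{\bgamma_h})$ at the cost of the extra cross terms $I_1$--$I_3$; this does not change the substance of the argument.
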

\begin{proof}
From \eqref{pro} and \eqref{pro3}, we have
\begin{equation}\label{s1}
\begin{split}
    &\lambda^{-1}t^2( {\cal Q}_{r_1}\bgamma-\bgamma_h)\\
    =& ( {\cal Q}_{r_1}(\nabla w-\btheta)-(\nabla_w w_h-{\cal Q}_{r_1}\btheta_0)
    \\
    =& \nabla_w (Q_hw -w_h)-({\cal Q}_{r_1}\btheta-{\cal Q}_{r_1}\btheta_0)
\end{split}
\end{equation}

Letting $\beta= Q_h\btheta-\btheta_h$ and $v=Q_hw-w_h$ in \eqref{erroreqn}, we obtain
\begin{equation*} 
    \begin{split}
    &\sum_{T\in {\cal T}_h} ({\cal C} \epsilon_w (e_{\btheta_h}), \epsilon_w (Q_h\btheta-\btheta_h))_T  +(e_{\bgamma_h}, \nabla_w (Q_hw-w_h)- {\cal Q}_{r_1} (Q_0\btheta-\btheta_0))_T\\
    &-s(w_h, Q_hw-w_h)\\
    =&\sum_{T\in {\cal T}_h} ({\cal C} \epsilon_w (e_{\btheta_h}), \epsilon_w (e_{\btheta_h}))_T + ({\cal C} \epsilon_w (e_{\btheta_h}), \epsilon_w (Q_h\btheta-\btheta))_T \\
    &+(e_{\bgamma_h}, \lambda^{-1}t^2( {\cal Q}_{r_1}\bgamma-\bgamma_h))_T + (e_{\bgamma_h},   -{\cal Q}_{r_1} (Q_0\btheta-\btheta))_T  -s(w_h, Q_hw-w_h)\\
    =&\sum_{T\in {\cal T}_h} ({\cal C} \epsilon_w (e_{\btheta_h}), \epsilon_w (e_{\btheta_h}))_T + ({\cal C} \epsilon_w (e_{\btheta_h}), \epsilon_w (Q_h\btheta-\btheta))_T \\
    &+\lambda^{-1}t^2 (e_{\bgamma_h},  e_{\bgamma_h})_T +\lambda^{-1}t^2 (e_{\bgamma_h},     {\cal Q}_{r_1}\bgamma-\bgamma)_T+ (e_{\bgamma_h},   -{\cal Q}_{r_1} (Q_0\btheta-\btheta))_T\\ &-s(w_h, Q_hw-w_h)\\
    =&  \sum_{T\in {\cal T}_h}    {\cal C}(\epsilon (\btheta)-   {\cal Q}_{r_2} (\epsilon  \btheta))\cdot\bn, \beta_0- \beta_b\rangle_{\partial T}\\
    &+\langle v_0-v_b, (\bgamma -{\cal Q}_{r_1} \bgamma) \cdot \bn\rangle_{\partial T}  -s(w_h, Q_hw-w_h).
    \end{split}
\end{equation*} 
This can be written as 
\begin{equation} \label{sum}
    \begin{split}
    &\sum_{T\in {\cal T}_h} ({\cal C} \epsilon_w (e_{\btheta_h}), \epsilon_w (e_{\btheta_h}))_T  +\lambda^{-1}t^2 (e_{\bgamma_h},  e_{\bgamma_h})_T\\
    =& \sum_{T\in {\cal T}_h}  -  ({\cal C} \epsilon_w (e_{\btheta_h}), \epsilon_w (Q_h\btheta-\btheta))_T -\lambda^{-1}t^2 (e_{\bgamma_h},     {\cal Q}_{r_1}\bgamma-\bgamma)_T\\&+ (e_{\bgamma_h},    {\cal Q}_{r_1} (Q_0\btheta-\btheta))_T+ {\cal C}(\epsilon (\btheta)-   {\cal Q}_{r_2} (\epsilon  \btheta))\cdot\bn, \beta_0- \beta_b\rangle_{\partial T} \\
    &+\langle v_0-v_b, (\bgamma -{\cal Q}_{r_1} \bgamma) \cdot \bn\rangle_{\partial T}  -s(w_h, Q_hw-w_h) \\
     =& \sum_{i=1}^6 I_i.
    \end{split}
\end{equation} 
 We will estimate $I_i$ for $i=1, \cdots, 5$ term by term.  
 
 Estimate of $I_1$:  By Cauchy–Schwarz inequality, 
 \begin{equation*}
     \begin{split}
     |I_1|  \leq (\sum_{T\in {\cal T}_h} \|{\cal C} \epsilon_w (e_{\btheta_h})\|_T^2)^{\frac{1}{2}} (\sum_{T\in {\cal T}_h}\|\epsilon_w (Q_h\btheta-\btheta)\|_T  ^2)^{\frac{1}{2}} \leq \3bar e_{\btheta_h}\3bar_{\Theta_h} \3barQ_h\btheta-\btheta\3bar_{\Theta_h}. 
     \end{split}
 \end{equation*}

  Estimate of $I_2$: By Cauchy–Schwarz inequality, 
 \begin{equation*}
     \begin{split}
     |I_2|  \leq \lambda^{-1} t^2 (\sum_{T\in {\cal T}_h} \| e_{\bgamma_h}\|_T^2)^{\frac{1}{2}} (\sum_{T\in {\cal T}_h}\| {\cal Q}_{r_1}\bgamma-\bgamma \|_T  ^2)^{\frac{1}{2}} \leq Ch^k\|\bgamma\|_k \|e_{\bgamma_h}\| . 
     \end{split}
 \end{equation*}

 Estimate of $I_3$: Using Cauchy-Schwarz inequality, 
 \begin{equation*}
     \begin{split}
|I_3|\leq  &  (\sum_{T\in {\cal T}_h}   \|e_{\bgamma_h}\|_T^2)^{\frac{1}{2}}    (\sum_{T\in {\cal T}_h}  \|{\cal Q}_{r_1} (Q_0\btheta-\btheta)\|_T)^2)^{\frac{1}{2}}\\
\leq & Ch^{k+1}{\|\btheta\|_{k+1}} 
\|e_{\bgamma_h}\|.
   \end{split}
 \end{equation*}

 Estimate of $I_4$: Using Cauchy-Schwarz inequality, the trace inequality \eqref{tracein},  the triangle inequality, and  \eqref{normeq2}, we have
 \begin{equation*}
     \begin{split}
     |I_4|  \leq & (\sum_{T\in {\cal T}_h} \| \epsilon (\btheta)-   {\cal Q}_{r_2} (\epsilon  \btheta))\cdot\bn\|_{\partial T}^2)^{\frac{1}{2}} (\sum_{T\in {\cal T}_h}\| \beta_0- \beta_b \|_{\partial T}  ^2)^{\frac{1}{2}} \\ \leq &   (\sum_{T\in {\cal T}_h}  \| \epsilon (\btheta)-   {\cal Q}_{r_2} (\epsilon  \btheta))\cdot\bn\|_{  T}^2+h_T^2 \| \epsilon (\btheta)-   {\cal Q}_{r_2} (\epsilon  \btheta))\cdot\bn\|_{1,   T}^2)^{\frac{1}{2}} \\& \cdot  (\sum_{T\in {\cal T}_h}h_T^{-1}\| \beta_0- \beta_b \|_{\partial T}  ^2)^{\frac{1}{2}} \\
     \leq & Ch^k\|\btheta\|_{k+1} \|\3barQ_h\btheta-\btheta_h\|_{1, h}\\
     \leq & Ch^k\|\btheta\|_{k+1} (\3barQ_h\btheta-\btheta \3bar_{\Theta_h}+\3bar \btheta-\btheta_h\3bar_{\Theta_h}).
     \end{split}
 \end{equation*}

Estimate of $I_5$: Using Cauchy-Schwarz inequality, the trace inequality \eqref{tracein},  the triangle inequality, and \eqref{normeq}, we have
 \begin{equation*}
     \begin{split}
     |I_5|  \leq & (\sum_{T\in {\cal T}_h} \|v_0-v_b\|_{\partial T}^2)^{\frac{1}{2}} (\sum_{T\in {\cal T}_h}\|(\bgamma -{\cal Q}_{r_1} \bgamma) \cdot \bn\|_{\partial T}  ^2)^{\frac{1}{2}} \\ \leq &   (\sum_{T\in {\cal T}_h} h^{-1}\|v_0-v_b\|_{\partial T}^2)^{\frac{1}{2}} (\sum_{T\in {\cal T}_h}\|(\bgamma -{\cal Q}_{r_1} \bgamma) \cdot \bn\|_{ T}  ^2+h_T^2\|(\bgamma -{\cal Q}_{r_1} \bgamma) \cdot \bn\|_{1, T}  ^2)^{\frac{1}{2}}\\
     \leq & Ch^k\|\bgamma\|_{k} \|Q_hw-w_h\|_{1, h}\leq Ch^k\|\bgamma\|_{k} (\3bar Q_hw-w \3bar_{W_h}+\3bar w-w_h\3bar_{W_h}).
     \end{split}
 \end{equation*}

Estimate of $I_6$: Using    the triangle inequality, and \eqref{normeq}, we have
 \begin{equation*}
     \begin{split}
   |I_6|&\leq |s(w_h-w, Q_hw-w_h)|\\
   &\leq \|w_h-w\|^2_{1,h}+\|Q_hw-w_h\|^2_{1,h}\\
    &\leq \3bar w_h-w\3bar_{W_h} ^2 +\3bar Q_hw-w_h\3bar_{W_h} ^2.
       \end{split}
 \end{equation*}
Substituting the bounds for $I_i$ ($i=1,\dots,5$) into \eqref{sum} and applying Lemmas \ref{w} and \ref{theta}, we obtain
\begin{equation}\label{con1}
    \3bar e_{\btheta_h}\3bar_{\Theta_h} +\lambda^{-\frac{1}{2}} t \| e_{\bgamma_h}\| \leq Ch^k(\|\bgamma\|_k+\|\btheta\|_{k+1}+\|w\|_{k+1})+\3bar w_h-w\3bar_{W_h} ^2.
\end{equation}

Bound for $\3bar w - w_h \3bar_{W_h}$: Using \eqref{s1}, Lemma \ref{w}, Lemma \ref{beta},  \eqref{con1},  \eqref{normeq2} and the triangle inequality, we have
 \begin{equation*}
     \begin{split}
  & \3bar w-w_h\3bar _{W_h}^2 \\
    \leq & \sum_{T\in {\cal T}_h} \| \nabla_w (w-Q_hw)\|_T^2 +\| \nabla_w (Q_hw-w_h)\|_T^2 \\
    \leq  &   \3bar w-Q_hw\3bar_{W_h} ^2 +  \sum_{T\in {\cal T}_h}\|\lambda^{-1} t^2 ({\cal Q}_{r_1}\bgamma-\bgamma_h)\|_T^2+\|  {\cal Q}_{r_1} (\btheta-\btheta_0)\|_T^2\\
   \leq  & \3bar  w-Q_hw \3bar_{W_h}^2 +\sum_{T\in {\cal T}_h} \|\lambda^{-1} t^2 ({\cal Q}_{r_1}\bgamma-\bgamma)\|_T^2+\|\lambda^{-1} t^2  e_{\bgamma_h} \|_T^2+\|  \btheta-\btheta_h \|_{1,h} \\ 
   \leq  & \3bar  w-Q_hw \3bar_{W_h}^2+ Ch^{2k}\|\bgamma\|^2_k+\lambda^{-1} t^2\|  e_{\bgamma_h}\|^2 +\3bar e_{\btheta_h} \3bar_{\Theta_h}^2\\ 
   \leq & Ch^{2k}(\|\bgamma\|^2_k+\|\btheta\|^2_{k+1}+\|w\|^2_{k+1}).
     \end{split}
 \end{equation*}
Combining this estimate with \eqref{con1} completes the proof of the theorem.
\end{proof}

\section{Numerical experiments}
We compute the WG solutions for two problems, by the $P_1$, $P_2$ and $P_3$ weak Galerkin finite elements, 
  on three types of meshes.
  
For both examples,   the tensor of bending muduli in \eqref{m1} is defined by
\begin{equation}
\label{c-1}
    \mathcal{C}\btheta=\frac E{12(1-\nu^2)} [ (1-\nu) \btheta + \nu \operatorname{tr}(\btheta) I ],
\end{equation}
with Young's modulus $E=1.092$ and Poisson's ratio $\nu=0.3$. The shear correction factor is chosen so that in  \eqref{m1}  
\begin{equation}
\label{c-2}
    \lambda=\frac {E\kappa} {2(1+\nu)}, \quad\text{and \ } \kappa=\frac 56.
\end{equation}

In the first example, the domain is $(0,1)\times(0,1)$ and the body load is
\a{ g = \frac E{12(1-\nu^2)}   \Big(&12 y (y-1) (5 x^2-5 x+1)  (2 y^2
                     (y-1)^2\\
                +x (x-1) (5 y^2-5 y+1))  
                 +&12 x (x-1) (5 y^2-5 y+1)  (2 x^2 (x-1)^2\\
                 +y (y-1)  (5 x^2-5 x+1))\Big),& }
                  with constants defined in \eqref{c-1} and \eqref{c-2}.
The exact solution of \eqref{m1} is
\begin{equation}\label{s-1}\left\{\begin{aligned}
  \btheta(x,y)&= \begin{pmatrix}   y^3 (y-1)^3 x^2 (x-1)^2 (2 x-1) \\
                x^3 (x-1)^3 y^2 (y-1)^2 (2 y-1)  \end{pmatrix}, \\
 w(x,y) &=   \frac 13 x^3 (x-1)^3 y^3 (y-1)^3- \frac{2 t^2}
                  {5  (1-\nu)} \\
                   &\quad \  \big(y^3 (y-1)^3 x (x-1) (5 x^2-5 x+1) \\
         &\quad \quad                      +x^3 (x-1)^3 y (y-1) (5 y^2-5 y+1)\big).
       \end{aligned} \right.
      \end{equation}
   
In the second example, the body load in \eqref{m1} $g=1$.
The exact solution of \eqref{m1} with coefficients \eqref{c-1}--\eqref{c-2} is
\begin{equation}\label{s-2}\left\{\begin{aligned}
  \btheta(x,y)&=\frac 1{16D} \begin{pmatrix} x(x^2+y^2-1)\\ y(x^2+y^2-1) \end{pmatrix}, \\
 w(x,y) &=\frac{(x^2+y^2)^2}{64D}-(x^2+y^2-1)(\frac{t^2}{4\lambda}+\frac 1{32D})
           - \frac 1{64D}, \end{aligned} \right.
      \end{equation}
where $D=E/[12(1-\nu^2)]$.

\begin{figure}[H]
 \begin{center}\setlength\unitlength{1.0pt}
\begin{picture}(320,108)(0,0)
  \put(15,101){$G_1$:} \put(125,101){$G_2$:} \put(235,101){$G_3$:} 
  \put(0,-160){\includegraphics[width=380pt]{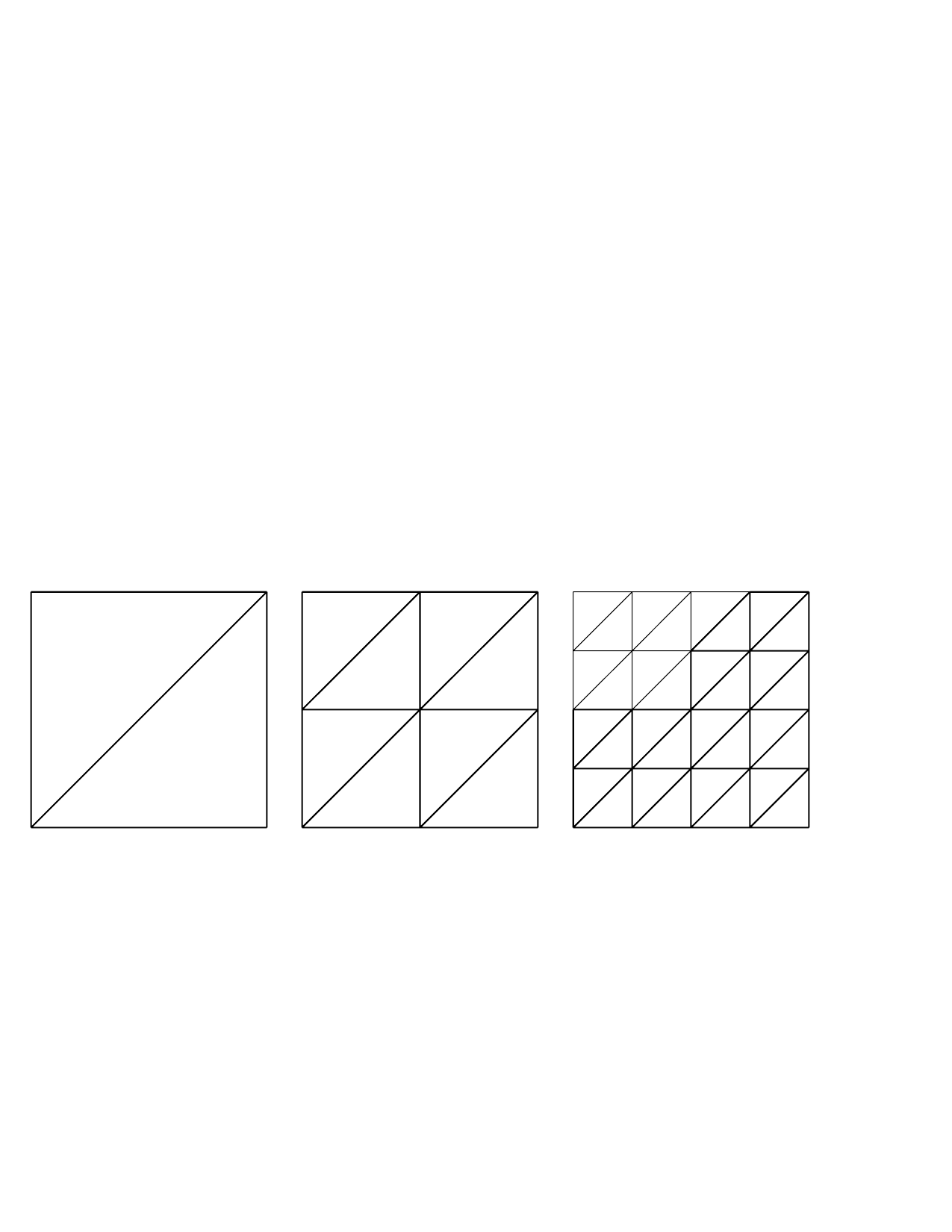}}  
 \end{picture}\end{center}
\caption{The triangular meshes for Tables \ref{t1}--\ref{t6}. }\label{f-1}
\end{figure}

We denote a finite element pair $W_h^0/\Theta_h^0$ by  $P_{k}$-$P_{p}$-$P_{r_1}$/$P_{q}$-$P_{t}$-$P_{r_2}$ for the $k$ and $p$ in \eqref{W-T},
   the $r_1$ in \eqref{2.4}, the $q$ and $t$ in \eqref{t-T}, and the $r_2$ in  \eqref{2.3}, in the tables of computational results.

 In Tables \ref{t1}--\ref{t3},  we list the results for three finite elements when computing the solution \eqref{s-1} on
   triangular meshes.  We can see the optimal order of convergence is achieved in all cases.
   There is no locking for thin plate $t=10^{-2}$.
In particular, when $t=10^{-2}$, we have one order of super-convergence for $w_h$, for all three finite elements.

  \begin{table}[H]
  \caption{ Error profile for computing \eqref{s-1} by the $P_{1}$-$P_{1}$-$P_{1}$/$P_{1}$-$P_{1}$-$P_{2}$ element on triangular
        meshes shown in Figure \ref{f-1}.} \label{t1}
\begin{center}  
   \begin{tabular}{c|rr|rr}  
 \hline 
&\multicolumn{4}{c}{ $t=1$ in \eqref{m1}.  }\\ \hline
$G_i$ &  $ \|Q_h w - w_h \| $ & $O(h^r)$ &  $ \3bar Q_h w- w_h\3bar_{W_h} $ & $O(h^r)$   \\  
 4&    0.183E-04 &  1.7&    0.422E-03 &  0.9\\
 5&    0.492E-05 &  1.9&    0.212E-03 &  1.0\\
 6&    0.125E-05 &  2.0&    0.106E-03 &  1.0\\
 \hline 
 &  $ \|Q_h \btheta -\btheta_h \| $ & $O(h^r)$ &  $ \3bar Q_h \btheta- \btheta_h\3bar_{\Theta_h} $ & $O(h^r)$   \\ 
 4&    0.887E-05 &  1.7&    0.572E-04 &  1.1\\
 5&    0.240E-05 &  1.9&    0.271E-04 &  1.1\\
 6&    0.616E-06 &  2.0&    0.131E-04 &  1.0\\
 \hline 
&\multicolumn{4}{c}{ $t=10^{-2}$ in \eqref{m1}.  }\\
 \hline  
$G_i$ &  $ \|Q_h w - w_h \| $ & $O(h^r)$ &  $ \3bar Q_h w- w_h\3bar_{W_h} $ & $O(h^r)$   \\ 
 4&    0.417E-05 &  2.9&    0.339E-05 &  1.9\\
 5&    0.541E-06 &  2.9&    0.939E-06 &  1.9\\
 6&    0.713E-07 &  2.9&    0.245E-06 &  1.9\\
 \hline 
 &  $ \|Q_h \btheta -\btheta_h \| $ & $O(h^r)$ &  $ \3bar Q_h \btheta- \btheta_h\3bar_{\Theta_h} $ & $O(h^r)$   \\  
 4&    0.338E-05 &  1.8&    0.526E-04 &  0.9\\
 5&    0.939E-06 &  1.8&    0.263E-04 &  1.0\\
 6&    0.245E-06 &  1.9&    0.130E-04 &  1.0\\
\hline 
\end{tabular} \end{center}  \end{table}
 
  \begin{table}[H]
  \caption{ Error profile for computing \eqref{s-1} by the $P_{2}$-$P_{2}$-$P_{2}$/$P_{2}$-$P_{2}$-$P_{3}$ element on triangular
        meshes shown in Figure \ref{f-1}.} \label{t2}
\begin{center}  
   \begin{tabular}{c|rr|rr}  
 \hline 
&\multicolumn{4}{c}{ $t=1$ in \eqref{m1}.  }\\ \hline
$G_i$ &  $ \|Q_h w - w_h \| $ & $O(h^r)$ &  $ \3bar Q_h w- w_h\3bar_{W_h} $ & $O(h^r)$   \\ 
 3&    0.791E-05 &  2.9&    0.204E-03 &  1.9\\
 4&    0.886E-06 &  3.2&    0.523E-04 &  2.0\\
 5&    0.104E-06 &  3.1&    0.131E-04 &  2.0\\
 \hline 
 &  $ \|Q_h \btheta -\btheta_h \| $ & $O(h^r)$ &  $ \3bar Q_h \btheta- \btheta_h\3bar_{\Theta_h} $ & $O(h^r)$   \\  
 3&    0.366E-05 &  2.7&    0.368E-04 &  1.7\\
 4&    0.414E-06 &  3.1&    0.102E-04 &  1.9\\
 5&    0.467E-07 &  3.1&    0.262E-05 &  2.0\\
 \hline 
&\multicolumn{4}{c}{ $t=10^{-2}$ in \eqref{m1}.  }\\
 \hline  
$G_i$ &  $ \|Q_h w - w_h \| $ & $O(h^r)$ &  $ \3bar Q_h w- w_h\3bar_{W_h} $ & $O(h^r)$   \\ 
 3&    0.621E-05 &  3.7&    0.254E-05 &  3.1\\
 4&    0.429E-06 &  3.9&    0.268E-06 &  3.2\\
 5&    0.277E-07 &  4.0&    0.302E-07 &  3.1\\
 \hline 
 &  $ \|Q_h \btheta -\btheta_h \| $ & $O(h^r)$ &  $ \3bar Q_h \btheta- \btheta_h\3bar_{\Theta_h} $ & $O(h^r)$   \\  
 3&    0.227E-05 &  2.9&    0.329E-04 &  1.7\\
 4&    0.256E-06 &  3.2&    0.926E-05 &  1.8\\
 5&    0.304E-07 &  3.1&    0.245E-05 &  1.9\\ 
\hline 
\end{tabular} \end{center}  \end{table}
 
  \begin{table}[H]
  \caption{ Error profile for computing \eqref{s-1} by the $P_{3}$-$P_{3}$-$P_{3}$/$P_{3}$-$P_{3}$-$P_{4}$ element on triangular
        meshes shown in Figure \ref{f-1}.} \label{t3}
\begin{center}  
   \begin{tabular}{c|rr|rr}  
 \hline 
&\multicolumn{4}{c}{ $t=1$ in \eqref{m1}.  }\\ \hline
$G_i$ &  $ \|Q_h w - w_h \| $ & $O(h^r)$ &  $ \3bar Q_h w- w_h\3bar_{W_h} $ & $O(h^r)$   \\ 
 4&    0.646E-07 &  4.2&    0.534E-05 &  3.0\\
 5&    0.365E-08 &  4.1&    0.680E-06 &  3.0\\
 6&    0.218E-09 &  4.1&    0.856E-07 &  3.0\\
 \hline 
 &  $ \|Q_h \btheta -\btheta_h \| $ & $O(h^r)$ &  $ \3bar Q_h \btheta- \btheta_h\3bar_{\Theta_h} $ & $O(h^r)$   \\  
 4&    0.395E-07 &  4.0&    0.124E-05 &  2.8\\
 5&    0.236E-08 &  4.1&    0.159E-06 &  3.0\\
 6&    0.143E-09 &  4.0&    0.200E-07 &  3.0\\
 \hline 
&\multicolumn{4}{c}{ $t=10^{-2}$ in \eqref{m1}.  }\\
 \hline  
$G_i$ &  $ \|Q_h w - w_h \| $ & $O(h^r)$ &  $ \3bar Q_h w- w_h\3bar_{W_h} $ & $O(h^r)$   \\ 
 4&    0.396E-07 &  4.7&    0.306E-07 &  4.1\\
 5&    0.133E-08 &  4.9&    0.178E-08 &  4.1\\
 6&    0.427E-10 &  5.0&    0.110E-09 &  4.0\\
 \hline 
 &  $ \|Q_h \btheta -\btheta_h \| $ & $O(h^r)$ &  $ \3bar Q_h \btheta- \btheta_h\3bar_{\Theta_h} $ & $O(h^r)$   \\  
 4&    0.281E-07 &  3.9&    0.114E-05 &  2.8\\
 5&    0.178E-08 &  4.0&    0.150E-06 &  2.9\\
 6&    0.119E-09 &  3.9&    0.194E-07 &  3.0\\
\hline 
\end{tabular} \end{center}  \end{table}

  \begin{table}[H]
  \caption{ Error profile for computing \eqref{s-2} by the $P_{1}$-$P_{1}$-$P_{1}$/$P_{1}$-$P_{1}$-$P_{2}$ element on triangular
        meshes shown in Figure \ref{f-1}.} \label{t4}
\begin{center}  
   \begin{tabular}{c|rr|rr}  
 \hline 
&\multicolumn{4}{c}{ $t=1$ in \eqref{m1}.  }\\ \hline
$G_i$ &  $ \|Q_h w - w_h \| $ & $O(h^r)$ &  $ \3bar Q_h w- w_h\3bar_{W_h} $ & $O(h^r)$   \\  
 4&    0.443E-03 &  1.9&    0.859E-02 &  1.0\\
 5&    0.113E-03 &  2.0&    0.427E-02 &  1.0\\
 6&    0.285E-04 &  2.0&    0.213E-02 &  1.0\\
 \hline 
 &  $ \|Q_h \btheta -\btheta_h \| $ & $O(h^r)$ &  $ \3bar Q_h \btheta- \btheta_h\3bar_{\Theta_h} $ & $O(h^r)$   \\ 
 4&    0.254E-03 &  1.8&    0.216E-02 &  1.0\\
 5&    0.662E-04 &  1.9&    0.107E-02 &  1.0\\
 6&    0.168E-04 &  2.0&    0.534E-03 &  1.0\\
 \hline 
&\multicolumn{4}{c}{ $t=10^{-2}$ in \eqref{m1}.  }\\
 \hline  
$G_i$ &  $ \|Q_h w - w_h \| $ & $O(h^r)$ &  $ \3bar Q_h w- w_h\3bar_{W_h} $ & $O(h^r)$   \\ 
 4&    0.627E-04 &  2.8&    0.154E-03 &  1.8\\
 5&    0.105E-04 &  2.6&    0.400E-04 &  1.9\\
 6&    0.216E-05 &  2.3&    0.101E-04 &  2.0\\
 \hline 
 &  $ \|Q_h \btheta -\btheta_h \| $ & $O(h^r)$ &  $ \3bar Q_h \btheta- \btheta_h\3bar_{\Theta_h} $ & $O(h^r)$   \\  
 4&    0.154E-03 &  1.8&    0.207E-02 &  0.9\\
 5&    0.400E-04 &  1.9&    0.106E-02 &  1.0\\
 6&    0.101E-04 &  2.0&    0.531E-03 &  1.0\\
\hline 
\end{tabular} \end{center}  \end{table}
 
 In Tables \ref{t4}--\ref{t6},  we list the results for three finite elements when computing the solution \eqref{s-2} on
   triangular meshes.  We can see the optimal order of convergence is achieved in all cases.
   There is no locking for thin plate $t=10^{-2}$.
   But unlike the first case of \eqref{s-1}, as the error of $\btheta_h$ dominates,  we have
     a deteriorate super-convergence for $w_h$ in the $L^2$ norm.
It seems we still have one order of super-convergence for $w_h$ in the triple-bar norm.

  \begin{table}[H]
  \caption{ Error profile for computing \eqref{s-2} by the $P_{2}$-$P_{2}$-$P_{2}$/$P_{2}$-$P_{2}$-$P_{3}$ element on triangular
        meshes shown in Figure \ref{f-1}.} \label{t5}
\begin{center}  
   \begin{tabular}{c|rr|rr}  
 \hline 
&\multicolumn{4}{c}{ $t=1$ in \eqref{m1}.  }\\ \hline
$G_i$ &  $ \|Q_h w - w_h \| $ & $O(h^r)$ &  $ \3bar Q_h w- w_h\3bar_{W_h} $ & $O(h^r)$   \\ 
 4&    0.116E-04 &  3.2&    0.776E-03 &  1.9\\
 5&    0.138E-05 &  3.1&    0.196E-03 &  2.0\\
 6&    0.168E-06 &  3.0&    0.492E-04 &  2.0\\
 \hline 
 &  $ \|Q_h \btheta -\btheta_h \| $ & $O(h^r)$ &  $ \3bar Q_h \btheta- \btheta_h\3bar_{\Theta_h} $ & $O(h^r)$   \\  
 4&    0.987E-05 &  3.1&    0.246E-03 &  1.9\\
 5&    0.116E-05 &  3.1&    0.624E-04 &  2.0\\
 6&    0.142E-06 &  3.0&    0.157E-04 &  2.0\\
 \hline 
&\multicolumn{4}{c}{ $t=10^{-2}$ in \eqref{m1}.  }\\
 \hline  
$G_i$ &  $ \|Q_h w - w_h \| $ & $O(h^r)$ &  $ \3bar Q_h w- w_h\3bar_{W_h} $ & $O(h^r)$   \\ 
 4&    0.550E-05 &  3.9&    0.677E-05 &  3.1\\
 5&    0.353E-06 &  4.0&    0.811E-06 &  3.1\\
 6&    0.232E-07 &  3.9&    0.103E-06 &  3.0\\
 \hline 
 &  $ \|Q_h \btheta -\btheta_h \| $ & $O(h^r)$ &  $ \3bar Q_h \btheta- \btheta_h\3bar_{\Theta_h} $ & $O(h^r)$   \\  
 4&    0.660E-05 &  3.1&    0.230E-03 &  1.9\\
 5&    0.815E-06 &  3.0&    0.591E-04 &  2.0\\
 6&    0.110E-06 &  2.9&    0.151E-04 &  2.0\\
\hline 
\end{tabular} \end{center}  \end{table}
 
  \begin{table}[H]
  \caption{ Error profile for computing \eqref{s-2} by the $P_{3}$-$P_{3}$-$P_{3}$/$P_{3}$-$P_{3}$-$P_{4}$ element on triangular
        meshes shown in Figure \ref{f-1}.} \label{t6}
\begin{center}  
   \begin{tabular}{c|rr|rr}  
 \hline 
&\multicolumn{4}{c}{ $t=1$ in \eqref{m1}.  }\\ \hline
$G_i$ &  $ \|Q_h w - w_h \| $ & $O(h^r)$ &  $ \3bar Q_h w- w_h\3bar_{W_h} $ & $O(h^r)$   \\ 
 4&    0.741E-06 &  4.1&    0.667E-04 &  2.9\\
 5&    0.442E-07 &  4.1&    0.845E-05 &  3.0\\
 6&    0.270E-08 &  4.0&    0.106E-05 &  3.0\\
 \hline 
 &  $ \|Q_h \btheta -\btheta_h \| $ & $O(h^r)$ &  $ \3bar Q_h \btheta- \btheta_h\3bar_{\Theta_h} $ & $O(h^r)$   \\  
 4&    0.664E-06 &  4.0&    0.224E-04 &  2.8\\
 5&    0.408E-07 &  4.0&    0.288E-05 &  3.0\\
 6&    0.251E-08 &  4.0&    0.364E-06 &  3.0\\
 \hline 
&\multicolumn{4}{c}{ $t=10^{-2}$ in \eqref{m1}.  }\\
 \hline  
$G_i$ &  $ \|Q_h w - w_h \| $ & $O(h^r)$ &  $ \3bar Q_h w- w_h\3bar_{W_h} $ & $O(h^r)$   \\ 
 4&    0.292E-06 &  5.0&    0.523E-06 &  4.1\\
 5&    0.929E-08 &  5.0&    0.323E-07 &  4.0\\
 6&    0.302E-09 &  4.9&    0.205E-08 &  4.0\\
 \hline 
 &  $ \|Q_h \btheta -\btheta_h \| $ & $O(h^r)$ &  $ \3bar Q_h \btheta- \btheta_h\3bar_{\Theta_h} $ & $O(h^r)$   \\  
 4&    0.501E-06 &  3.9&    0.210E-04 &  2.8\\
 5&    0.327E-07 &  3.9&    0.276E-05 &  2.9\\
 6&    0.218E-08 &  3.9&    0.356E-06 &  3.0\\
\hline 
\end{tabular} \end{center}  \end{table}

\begin{figure}[H]
 \begin{center}\setlength\unitlength{1.0pt}
\begin{picture}(320,108)(0,0)
  \put(15,101){$G_1$:} \put(125,101){$G_2$:} \put(235,101){$G_3$:} 
  \put(0,-160){\includegraphics[width=380pt]{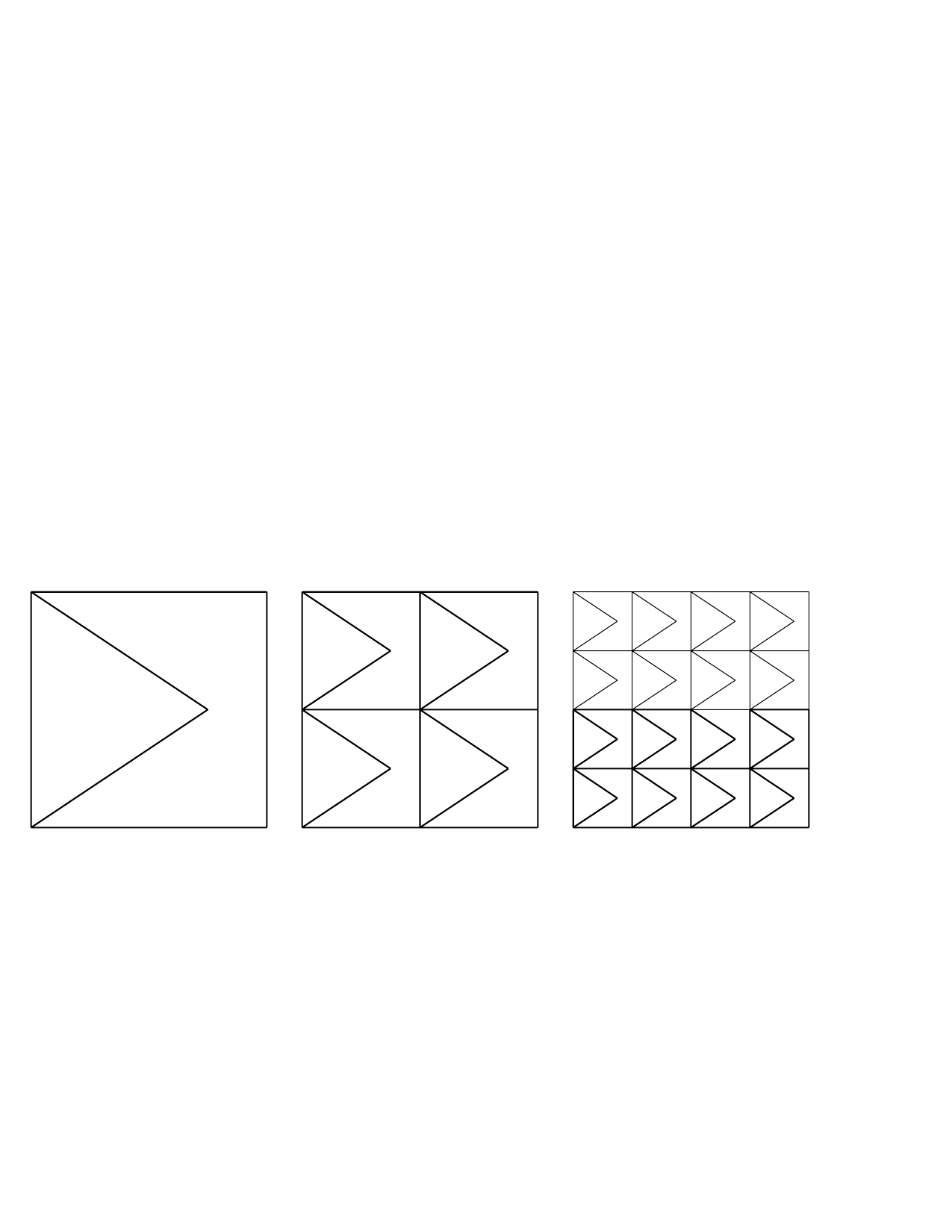}}  
 \end{picture}\end{center}
\caption{The non-convex polygonal meshes for Tables \ref{t7}--\ref{t12}. }\label{f-2}
\end{figure}

 In Tables \ref{t7}--\ref{t12}, the computation is done on non-convex polygonal meshes shown in Figure \ref{f-2}.
The optimal order of convergence is obtained in all cases.
In particular, when $t=10^{-2}$,  we still have one order of super-convergence for $w_h$, even on 
  the nonconvex polygonal meshes.

  \begin{table}[H]
  \caption{ Error profile for computing \eqref{s-1} by the $P_{1}$-$P_{1}$-$P_{1}$/$P_{1}$-$P_{1}$-$P_{3}$ element on polygonal
        meshes shown in Figure \ref{f-2}.} \label{t7}
\begin{center}  
   \begin{tabular}{c|rr|rr}  
 \hline 
&\multicolumn{4}{c}{ $t=1$ in \eqref{m1}.  }\\ \hline
$G_i$ &  $ \|Q_h w - w_h \| $ & $O(h^r)$ &  $ \3bar Q_h w- w_h\3bar_{W_h} $ & $O(h^r)$   \\  
 4&    0.231E-04 &  1.7&    0.488E-03 &  1.0\\
 5&    0.721E-05 &  1.7&    0.233E-03 &  1.1\\
 6&    0.197E-05 &  1.9&    0.113E-03 &  1.0\\
 \hline 
 &  $ \|Q_h \btheta -\btheta_h \| $ & $O(h^r)$ &  $ \3bar Q_h \btheta- \btheta_h\3bar_{\Theta_h} $ & $O(h^r)$   \\ 
 4&    0.211E-04 &  1.4&    0.142E-03 &  1.2\\
 5&    0.658E-05 &  1.7&    0.689E-04 &  1.0\\
 6&    0.178E-05 &  1.9&    0.338E-04 &  1.0\\
 \hline 
&\multicolumn{4}{c}{ $t=10^{-2}$ in \eqref{m1}.  }\\
 \hline  
$G_i$ &  $ \|Q_h w - w_h \| $ & $O(h^r)$ &  $ \3bar Q_h w- w_h\3bar_{W_h} $ & $O(h^r)$   \\ 
 4&    0.517E-05 &  2.8&    0.152E-04 &  1.5\\
 5&    0.877E-06 &  2.6&    0.444E-05 &  1.8\\
 6&    0.181E-06 &  2.3&    0.115E-05 &  1.9\\ 
 \hline 
 &  $ \|Q_h \btheta -\btheta_h \| $ & $O(h^r)$ &  $ \3bar Q_h \btheta- \btheta_h\3bar_{\Theta_h} $ & $O(h^r)$   \\  
 4&    0.150E-04 &  1.5&    0.131E-03 &  1.2\\
 5&    0.443E-05 &  1.8&    0.654E-04 &  1.0\\
 6&    0.115E-05 &  1.9&    0.327E-04 &  1.0\\
\hline 
\end{tabular} \end{center}  \end{table}

  \begin{table}[H]
  \caption{ Error profile for computing \eqref{s-1} by the $P_{2}$-$P_{2}$-$P_{2}$/$P_{2}$-$P_{2}$-$P_{4}$ element on polygonal
        meshes shown in Figure \ref{f-2}.} \label{t8}
\begin{center}  
   \begin{tabular}{c|rr|rr}  
 \hline 
&\multicolumn{4}{c}{ $t=1$ in \eqref{m1}.  }\\ \hline
$G_i$ &  $ \|Q_h w - w_h \| $ & $O(h^r)$ &  $ \3bar Q_h w- w_h\3bar_{W_h} $ & $O(h^r)$   \\  
 3&    0.267E-04 &  5.4&    0.667E-03 &  4.0\\
 4&    0.144E-05 &  4.2&    0.741E-04 &  3.2\\
 5&    0.158E-06 &  3.2&    0.169E-04 &  2.1\\
 \hline 
 &  $ \|Q_h \btheta -\btheta_h \| $ & $O(h^r)$ &  $ \3bar Q_h \btheta- \btheta_h\3bar_{\Theta_h} $ & $O(h^r)$   \\ 
 3&    0.123E-04 &  5.0&    0.196E-03 &  3.0\\
 4&    0.100E-05 &  3.6&    0.280E-04 &  2.8\\
 5&    0.118E-06 &  3.1&    0.712E-05 &  2.0\\
 \hline 
&\multicolumn{4}{c}{ $t=10^{-2}$ in \eqref{m1}.  }\\
 \hline  
$G_i$ &  $ \|Q_h w - w_h \| $ & $O(h^r)$ &  $ \3bar Q_h w- w_h\3bar_{W_h} $ & $O(h^r)$   \\ 
 3&    0.823E-05 &  5.5&    0.234E-04 &  4.1\\
 4&    0.521E-06 &  4.0&    0.166E-05 &  3.8\\
 5&    0.340E-07 &  3.9&    0.137E-06 &  3.6\\
 \hline 
 &  $ \|Q_h \btheta -\btheta_h \| $ & $O(h^r)$ &  $ \3bar Q_h \btheta- \btheta_h\3bar_{\Theta_h} $ & $O(h^r)$   \\  
 3&    0.105E-04 &  4.7&    0.197E-03 &  3.0\\
 4&    0.826E-06 &  3.7&    0.272E-04 &  2.9\\
 5&    0.103E-06 &  3.0&    0.687E-05 &  2.0\\
\hline 
\end{tabular} \end{center}  \end{table}

  \begin{table}[H]
  \caption{ Error profile for computing \eqref{s-1} by the $P_3$-$P_3$-$P_3$/$P_3$-$P_3$-$P_5$ element on polygonal
        meshes shown in Figure \ref{f-2}.} \label{t9}
\begin{center}  
   \begin{tabular}{c|rr|rr}  
 \hline 
&\multicolumn{4}{c}{ $t=1$ in \eqref{m1}.  }\\ \hline
$G_i$ &  $ \|Q_h w - w_h \| $ & $O(h^r)$ &  $ \3bar Q_h w- w_h\3bar_{W_h} $ & $O(h^r)$   \\  
 3&    0.170E-04 &  6.2&    0.840E-03 &  3.7\\
 4&    0.269E-06 &  6.0&    0.279E-04 &  4.9\\
 5&    0.745E-08 &  5.2&    0.117E-05 &  4.6\\
 \hline 
 &  $ \|Q_h \btheta -\btheta_h \| $ & $O(h^r)$ &  $ \3bar Q_h \btheta- \btheta_h\3bar_{\Theta_h} $ & $O(h^r)$   \\ 
 3&    0.489E-05 &  6.6&    0.120E-03 &  5.1\\
 4&    0.117E-06 &  5.4&    0.500E-05 &  4.6\\
 5&    0.678E-08 &  4.1&    0.482E-06 &  3.4\\
 \hline 
&\multicolumn{4}{c}{ $t=10^{-2}$ in \eqref{m1}.  }\\
 \hline  
$G_i$ &  $ \|Q_h w - w_h \| $ & $O(h^r)$ &  $ \3bar Q_h w- w_h\3bar_{W_h} $ & $O(h^r)$   \\ 
 3&    0.131E-05 &  6.4&    0.210E-04 &  4.3\\
 4&    0.488E-07 &  4.7&    0.718E-06 &  4.9\\
 5&    0.166E-08 &  4.9&    0.233E-07 &  4.9\\
 \hline 
 &  $ \|Q_h \btheta -\btheta_h \| $ & $O(h^r)$ &  $ \3bar Q_h \btheta- \btheta_h\3bar_{\Theta_h} $ & $O(h^r)$   \\  
 3&    0.428E-05 &  6.6&    0.120E-03 &  5.1\\
 4&    0.925E-07 &  5.5&    0.478E-05 &  4.7\\
 5&    0.468E-08 &  4.3&    0.438E-06 &  3.4\\
\hline 
\end{tabular} \end{center}  \end{table}

  \begin{table}[H]
  \caption{ Error profile for computing \eqref{s-2} by the $P_{1}$-$P_{1}$-$P_{1}$/$P_{1}$-$P_{1}$-$P_{3}$ element on polygonal
        meshes shown in Figure \ref{f-2}.} \label{t10}
\begin{center}  
   \begin{tabular}{c|rr|rr}  
 \hline 
&\multicolumn{4}{c}{ $t=1$ in \eqref{m1}.  }\\ \hline
$G_i$ &  $ \|Q_h w - w_h \| $ & $O(h^r)$ &  $ \3bar Q_h w- w_h\3bar_{W_h} $ & $O(h^r)$   \\  
 3&    0.868E-04 &  3.7&    0.109E-02 &  2.6\\
 4&    0.240E-04 &  1.9&    0.470E-03 &  1.2\\
 5&    0.697E-05 &  1.8&    0.237E-03 &  1.0\\
 \hline 
 &  $ \|Q_h \btheta -\btheta_h \| $ & $O(h^r)$ &  $ \3bar Q_h \btheta- \btheta_h\3bar_{\Theta_h} $ & $O(h^r)$   \\ 
 3&    0.596E-04 &  3.0&    0.601E-03 &  2.5\\
 4&    0.203E-04 &  1.6&    0.229E-03 &  1.4\\
 5&    0.584E-05 &  1.8&    0.113E-03 &  1.0\\
 \hline 
&\multicolumn{4}{c}{ $t=10^{-2}$ in \eqref{m1}.  }\\
 \hline  
$G_i$ &  $ \|Q_h w - w_h \| $ & $O(h^r)$ &  $ \3bar Q_h w- w_h\3bar_{W_h} $ & $O(h^r)$   \\ 
 3&    0.393E-04 &  4.0&    0.540E-04 &  2.6\\
 4&    0.505E-05 &  3.0&    0.150E-04 &  1.9\\
 5&    0.766E-06 &  2.7&    0.394E-05 &  1.9\\
 \hline 
 &  $ \|Q_h \btheta -\btheta_h \| $ & $O(h^r)$ &  $ \3bar Q_h \btheta- \btheta_h\3bar_{\Theta_h} $ & $O(h^r)$   \\  
 3&    0.470E-04 &  3.4&    0.584E-03 &  2.6\\
 4&    0.142E-04 &  1.7&    0.224E-03 &  1.4\\
 5&    0.388E-05 &  1.9&    0.112E-03 &  1.0\\
\hline 
\end{tabular} \end{center}  \end{table}

  \begin{table}[H]
  \caption{ Error profile for computing \eqref{s-2} by the $P_{2}$-$P_{2}$-$P_{2}$/$P_{2}$-$P_{2}$-$P_{4}$ element on polygonal
        meshes shown in Figure \ref{f-2}.} \label{t11}
\begin{center}  
   \begin{tabular}{c|rr|rr}  
 \hline 
&\multicolumn{4}{c}{ $t=1$ in \eqref{m1}.  }\\ \hline
$G_i$ &  $ \|Q_h w - w_h \| $ & $O(h^r)$ &  $ \3bar Q_h w- w_h\3bar_{W_h} $ & $O(h^r)$   \\  
 3&    0.355E-04 &  6.2&    0.103E-02 &  4.0\\
 4&    0.162E-05 &  4.5&    0.742E-04 &  3.8\\
 5&    0.162E-06 &  3.3&    0.158E-04 &  2.2\\
 \hline 
 &  $ \|Q_h \btheta -\btheta_h \| $ & $O(h^r)$ &  $ \3bar Q_h \btheta- \btheta_h\3bar_{\Theta_h} $ & $O(h^r)$   \\ 
 3&    0.133E-04 &  5.6&    0.295E-03 &  3.5\\
 4&    0.907E-06 &  3.9&    0.392E-04 &  2.9\\
 5&    0.106E-06 &  3.1&    0.953E-05 &  2.0\\
 \hline 
&\multicolumn{4}{c}{ $t=10^{-2}$ in \eqref{m1}.  }\\
 \hline  
$G_i$ &  $ \|Q_h w - w_h \| $ & $O(h^r)$ &  $ \3bar Q_h w- w_h\3bar_{W_h} $ & $O(h^r)$   \\ 
 3&    0.800E-05 &  5.8&    0.364E-04 &  4.2\\
 4&    0.484E-06 &  4.0&    0.238E-05 &  3.9\\
 5&    0.318E-07 &  3.9&    0.172E-06 &  3.8\\
 \hline 
 &  $ \|Q_h \btheta -\btheta_h \| $ & $O(h^r)$ &  $ \3bar Q_h \btheta- \btheta_h\3bar_{\Theta_h} $ & $O(h^r)$   \\  
 3&    0.129E-04 &  5.4&    0.295E-03 &  3.5\\
 4&    0.863E-06 &  3.9&    0.391E-04 &  2.9\\
 5&    0.101E-06 &  3.1&    0.946E-05 &  2.0\\
\hline 
\end{tabular} \end{center}  \end{table}

  \begin{table}[H]
  \caption{ Error profile for computing \eqref{s-2} by the $P_3$-$P_3$-$P_3$/$P_3$-$P_3$-$P_5$ element on polygonal
        meshes shown in Figure \ref{f-2}.} \label{t12}
\begin{center}  
   \begin{tabular}{c|rr|rr}  
 \hline 
&\multicolumn{4}{c}{ $t=1$ in \eqref{m1}.  }\\ \hline
$G_i$ &  $ \|Q_h w - w_h \| $ & $O(h^r)$ &  $ \3bar Q_h w- w_h\3bar_{W_h} $ & $O(h^r)$   \\  
 2&    0.242E-02 &  5.6&    0.211E-01 &  4.7\\
 3&    0.207E-04 &  6.9&    0.904E-03 &  4.5\\
 4&    0.302E-06 &  6.1&    0.287E-04 &  5.0\\
 \hline 
 &  $ \|Q_h \btheta -\btheta_h \| $ & $O(h^r)$ &  $ \3bar Q_h \btheta- \btheta_h\3bar_{\Theta_h} $ & $O(h^r)$   \\ 
 2&    0.688E-03 &  5.5&    0.429E-02 &  4.6\\
 3&    0.470E-05 &  7.2&    0.133E-03 &  5.0\\
 4&    0.931E-07 &  5.7&    0.575E-05 &  4.5\\
 \hline 
&\multicolumn{4}{c}{ $t=10^{-2}$ in \eqref{m1}.  }\\
 \hline  
$G_i$ &  $ \|Q_h w - w_h \| $ & $O(h^r)$ &  $ \3bar Q_h w- w_h\3bar_{W_h} $ & $O(h^r)$   \\ 
 2&    0.165E-03 &  6.5&    0.767E-03 &  5.1\\
 3&    0.133E-05 &  7.0&    0.228E-04 &  5.1\\
 4&    0.478E-07 &  4.8&    0.764E-06 &  4.9\\
 \hline 
 &  $ \|Q_h \btheta -\btheta_h \| $ & $O(h^r)$ &  $ \3bar Q_h \btheta- \btheta_h\3bar_{\Theta_h} $ & $O(h^r)$   \\  
 2&    0.623E-03 &  5.5&    0.426E-02 &  4.6\\
 3&    0.470E-05 &  7.0&    0.134E-03 &  5.0\\
 4&    0.946E-07 &  5.6&    0.576E-05 &  4.5\\
\hline 
\end{tabular} \end{center}  \end{table}

\begin{figure}[H]
 \begin{center}\setlength\unitlength{1.0pt}
\begin{picture}(320,108)(0,0)
  \put(15,101){$G_1$:} \put(125,101){$G_2$:} \put(235,101){$G_3$:} 
  \put(0,-160){\includegraphics[width=380pt]{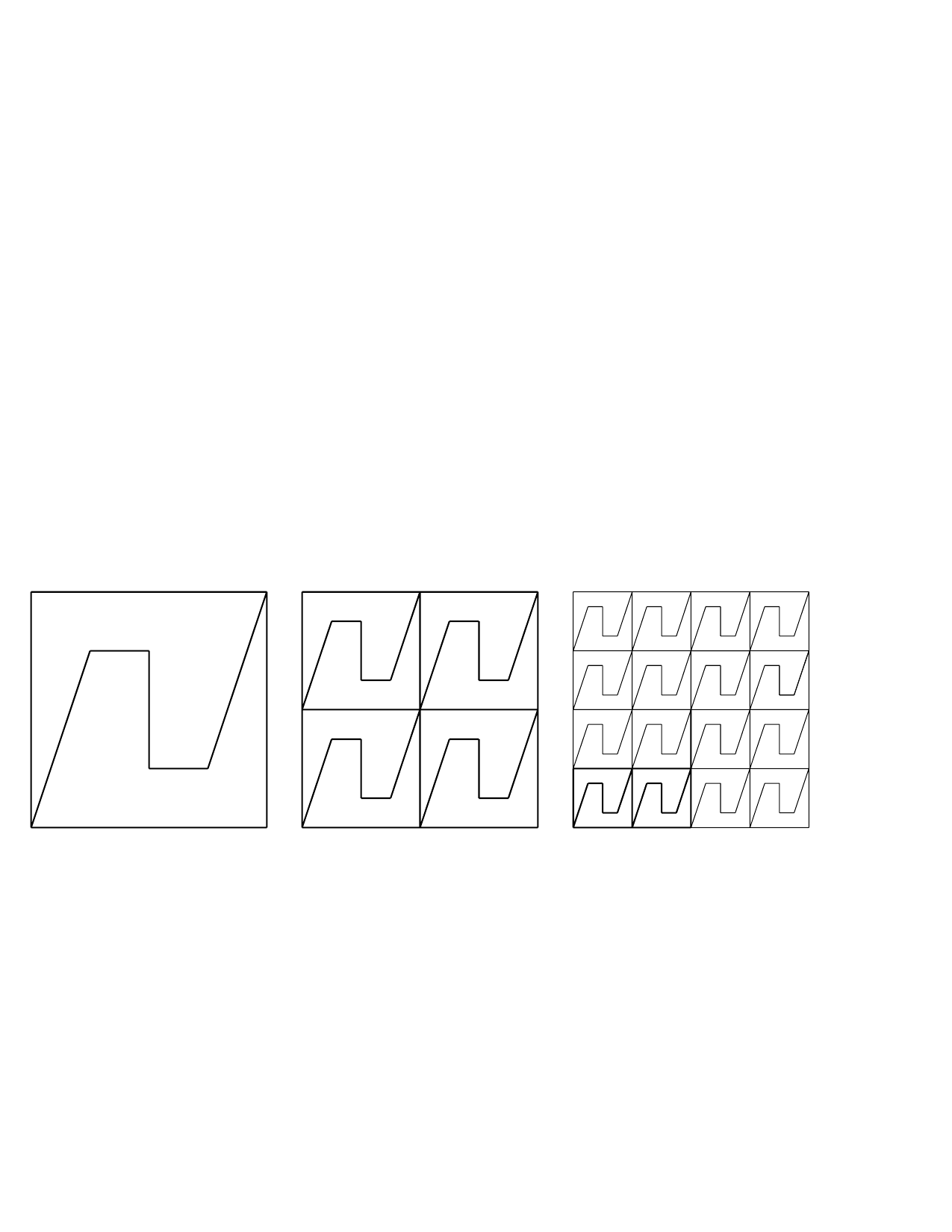}}  
 \end{picture}\end{center}
\caption{The non-convex polygonal meshes for Tables \ref{t13}--\ref{t18}. }\label{f-3}
\end{figure}

 In Tables \ref{t13}--\ref{t18}, the computation is done on non-convex polygonal meshes shown in Figure \ref{f-3}.
The optimal order of convergence is obtained in all cases.
And it shows the scheme is locking-free.

  \begin{table}[H]
  \caption{ Error profile for computing \eqref{s-1} by the $P_{1}$-$P_{1}$-$P_{1}$/$P_{1}$-$P_{1}$-$P_{4}$ element on polygonal
        meshes shown in Figure \ref{f-3}.} \label{t13}
\begin{center}  
   \begin{tabular}{c|rr|rr}  
 \hline 
&\multicolumn{4}{c}{ $t=1$ in \eqref{m1}.  }\\ \hline
$G_i$ &  $ \|Q_h w - w_h \| $ & $O(h^r)$ &  $ \3bar Q_h w- w_h\3bar_{W_h} $ & $O(h^r)$   \\  
 3&    0.868E-04 &  3.7&    0.109E-02 &  2.6\\
 4&    0.240E-04 &  1.9&    0.470E-03 &  1.2\\
 5&    0.697E-05 &  1.8&    0.237E-03 &  1.0\\ 
 \hline 
 &  $ \|Q_h \btheta -\btheta_h \| $ & $O(h^r)$ &  $ \3bar Q_h \btheta- \btheta_h\3bar_{\Theta_h} $ & $O(h^r)$   \\ 
 3&    0.596E-04 &  3.0&    0.601E-03 &  2.5\\
 4&    0.203E-04 &  1.6&    0.229E-03 &  1.4\\
 5&    0.584E-05 &  1.8&    0.113E-03 &  1.0\\ 
 \hline 
&\multicolumn{4}{c}{ $t=10^{-2}$ in \eqref{m1}.  }\\
 \hline  
$G_i$ &  $ \|Q_h w - w_h \| $ & $O(h^r)$ &  $ \3bar Q_h w- w_h\3bar_{W_h} $ & $O(h^r)$   \\ 
 3&    0.393E-04 &  4.0&    0.540E-04 &  2.6\\
 4&    0.505E-05 &  3.0&    0.150E-04 &  1.9\\
 5&    0.766E-06 &  2.7&    0.394E-05 &  1.9\\
 \hline 
 &  $ \|Q_h \btheta -\btheta_h \| $ & $O(h^r)$ &  $ \3bar Q_h \btheta- \btheta_h\3bar_{\Theta_h} $ & $O(h^r)$   \\  
 3&    0.470E-04 &  3.4&    0.584E-03 &  2.6\\
 4&    0.142E-04 &  1.7&    0.224E-03 &  1.4\\
 5&    0.388E-05 &  1.9&    0.112E-03 &  1.0\\ 
\hline 
\end{tabular} \end{center}  \end{table}

  \begin{table}[H]
  \caption{ Error profile for computing \eqref{s-1} by the $P_{2}$-$P_{2}$-$P_{2}$/$P_{2}$-$P_{2}$-$P_{5}$ element on polygonal
        meshes shown in Figure \ref{f-3}.} \label{t14}
\begin{center}  
   \begin{tabular}{c|rr|rr}  
 \hline 
&\multicolumn{4}{c}{ $t=1$ in \eqref{m1}.  }\\ \hline
$G_i$ &  $ \|Q_h w - w_h \| $ & $O(h^r)$ &  $ \3bar Q_h w- w_h\3bar_{W_h} $ & $O(h^r)$   \\  
 3&    0.355E-04 &  6.2&    0.103E-02 &  4.0\\
 4&    0.162E-05 &  4.5&    0.742E-04 &  3.8\\
 5&    0.162E-06 &  3.3&    0.158E-04 &  2.2\\ 
 \hline 
 &  $ \|Q_h \btheta -\btheta_h \| $ & $O(h^r)$ &  $ \3bar Q_h \btheta- \btheta_h\3bar_{\Theta_h} $ & $O(h^r)$   \\ 
 3&    0.133E-04 &  5.6&    0.295E-03 &  3.5\\
 4&    0.907E-06 &  3.9&    0.392E-04 &  2.9\\
 5&    0.106E-06 &  3.1&    0.953E-05 &  2.0\\ 
 \hline 
&\multicolumn{4}{c}{ $t=10^{-2}$ in \eqref{m1}.  }\\
 \hline  
$G_i$ &  $ \|Q_h w - w_h \| $ & $O(h^r)$ &  $ \3bar Q_h w- w_h\3bar_{W_h} $ & $O(h^r)$   \\ 
 3&    0.800E-05 &  5.8&    0.364E-04 &  4.2\\
 4&    0.484E-06 &  4.0&    0.238E-05 &  3.9\\
 5&    0.318E-07 &  3.9&    0.172E-06 &  3.8\\
 \hline 
 &  $ \|Q_h \btheta -\btheta_h \| $ & $O(h^r)$ &  $ \3bar Q_h \btheta- \btheta_h\3bar_{\Theta_h} $ & $O(h^r)$   \\  
 3&    0.129E-04 &  5.4&    0.295E-03 &  3.5\\
 4&    0.863E-06 &  3.9&    0.391E-04 &  2.9\\
 5&    0.101E-06 &  3.1&    0.946E-05 &  2.0\\
\hline 
\end{tabular} \end{center}  \end{table}

  \begin{table}[H]
  \caption{ Error profile for computing \eqref{s-1} by the $P_3$-$P_3$-$P_3$/$P_3$-$P_3$-$P_6$ element on polygonal
        meshes shown in Figure \ref{f-3}.} \label{t15}
\begin{center}  
   \begin{tabular}{c|rr|rr}  
 \hline 
&\multicolumn{4}{c}{ $t=1$ in \eqref{m1}.  }\\ \hline
$G_i$ &  $ \|Q_h w - w_h \| $ & $O(h^r)$ &  $ \3bar Q_h w- w_h\3bar_{W_h} $ & $O(h^r)$   \\  
 2&    0.242E-02 &  5.6&    0.211E-01 &  4.7\\
 3&    0.207E-04 &  6.9&    0.904E-03 &  4.5\\
 4&    0.302E-06 &  6.1&    0.287E-04 &  5.0\\
 \hline 
 &  $ \|Q_h \btheta -\btheta_h \| $ & $O(h^r)$ &  $ \3bar Q_h \btheta- \btheta_h\3bar_{\Theta_h} $ & $O(h^r)$   \\ 
 2&    0.688E-03 &  5.5&    0.429E-02 &  4.6\\
 3&    0.470E-05 &  7.2&    0.133E-03 &  5.0\\
 4&    0.931E-07 &  5.7&    0.575E-05 &  4.5\\
 \hline 
&\multicolumn{4}{c}{ $t=10^{-2}$ in \eqref{m1}.  }\\
 \hline  
$G_i$ &  $ \|Q_h w - w_h \| $ & $O(h^r)$ &  $ \3bar Q_h w- w_h\3bar_{W_h} $ & $O(h^r)$   \\ 
 2&    0.165E-03 &  6.5&    0.767E-03 &  5.1\\
 3&    0.133E-05 &  7.0&    0.228E-04 &  5.1\\
 4&    0.478E-07 &  4.8&    0.764E-06 &  4.9\\
 \hline 
 &  $ \|Q_h \btheta -\btheta_h \| $ & $O(h^r)$ &  $ \3bar Q_h \btheta- \btheta_h\3bar_{\Theta_h} $ & $O(h^r)$   \\  
 2&    0.623E-03 &  5.5&    0.426E-02 &  4.6\\
 3&    0.470E-05 &  7.0&    0.134E-03 &  5.0\\
 4&    0.946E-07 &  5.6&    0.576E-05 &  4.5\\
\hline 
\end{tabular} \end{center}  \end{table}

  \begin{table}[H]
  \caption{ Error profile for computing \eqref{s-2} by the $P_{1}$-$P_{1}$-$P_{1}$/$P_{1}$-$P_{1}$-$P_{4}$ element on polygonal
        meshes shown in Figure \ref{f-3}.} \label{t16}
\begin{center}  
   \begin{tabular}{c|rr|rr}  
 \hline 
&\multicolumn{4}{c}{ $t=1$ in \eqref{m1}.  }\\ \hline
$G_i$ &  $ \|Q_h w - w_h \| $ & $O(h^r)$ &  $ \3bar Q_h w- w_h\3bar_{W_h} $ & $O(h^r)$   \\  
 3&    0.390E-02 &  2.6&    0.246E-01 &  2.2\\
 4&    0.958E-03 &  2.0&    0.986E-02 &  1.3\\
 5&    0.243E-03 &  2.0&    0.468E-02 &  1.1\\
 \hline 
 &  $ \|Q_h \btheta -\btheta_h \| $ & $O(h^r)$ &  $ \3bar Q_h \btheta- \btheta_h\3bar_{\Theta_h} $ & $O(h^r)$   \\ 
 3&    0.246E-02 &  2.0&    0.192E-01 &  1.7\\
 4&    0.784E-03 &  1.6&    0.932E-02 &  1.0\\
 5&    0.214E-03 &  1.9&    0.471E-02 &  1.0\\
 \hline 
&\multicolumn{4}{c}{ $t=10^{-2}$ in \eqref{m1}.  }\\
 \hline  
$G_i$ &  $ \|Q_h w - w_h \| $ & $O(h^r)$ &  $ \3bar Q_h w- w_h\3bar_{W_h} $ & $O(h^r)$   \\ 
 3&    0.547E-03 &  2.2&    0.214E-02 &  1.7\\
 4&    0.141E-03 &  2.0&    0.647E-03 &  1.7\\
 5&    0.358E-04 &  2.0&    0.170E-03 &  1.9\\
 \hline 
 &  $ \|Q_h \btheta -\btheta_h \| $ & $O(h^r)$ &  $ \3bar Q_h \btheta- \btheta_h\3bar_{\Theta_h} $ & $O(h^r)$   \\  
 3&    0.203E-02 &  2.2&    0.188E-01 &  1.7\\
 4&    0.636E-03 &  1.7&    0.921E-02 &  1.0\\
 5&    0.169E-03 &  1.9&    0.468E-02 &  1.0\\
\hline 
\end{tabular} \end{center}  \end{table}

  \begin{table}[H]
  \caption{ Error profile for computing \eqref{s-2} by the $P_{2}$-$P_{2}$-$P_{2}$/$P_{2}$-$P_{2}$-$P_{5}$ element on polygonal
        meshes shown in Figure \ref{f-3}.} \label{t17}
\begin{center}  
   \begin{tabular}{c|rr|rr}  
 \hline 
&\multicolumn{4}{c}{ $t=1$ in \eqref{m1}.  }\\ \hline
$G_i$ &  $ \|Q_h w - w_h \| $ & $O(h^r)$ &  $ \3bar Q_h w- w_h\3bar_{W_h} $ & $O(h^r)$   \\  
 3&    0.289E-03 &  4.7&    0.772E-02 &  4.0\\
 4&    0.207E-04 &  3.8&    0.105E-02 &  2.9\\
 5&    0.229E-05 &  3.2&    0.261E-03 &  2.0\\
 \hline 
 &  $ \|Q_h \btheta -\btheta_h \| $ & $O(h^r)$ &  $ \3bar Q_h \btheta- \btheta_h\3bar_{\Theta_h} $ & $O(h^r)$   \\ 
 3&    0.228E-03 &  4.3&    0.522E-02 &  3.4\\
 4&    0.247E-04 &  3.2&    0.114E-02 &  2.2\\
 5&    0.298E-05 &  3.0&    0.293E-03 &  2.0\\
 \hline 
&\multicolumn{4}{c}{ $t=10^{-2}$ in \eqref{m1}.  }\\
 \hline  
$G_i$ &  $ \|Q_h w - w_h \| $ & $O(h^r)$ &  $ \3bar Q_h w- w_h\3bar_{W_h} $ & $O(h^r)$   \\ 
 2&    0.127E-02 &  3.3&    0.699E-02 &  3.9\\
 3&    0.115E-03 &  3.5&    0.642E-03 &  3.4\\
 4&    0.798E-05 &  3.8&    0.457E-04 &  3.8\\ 
 \hline 
 &  $ \|Q_h \btheta -\btheta_h \| $ & $O(h^r)$ &  $ \3bar Q_h \btheta- \btheta_h\3bar_{\Theta_h} $ & $O(h^r)$   \\  
 2&    0.442E-02 &  4.4&    0.551E-01 &  3.2\\
 3&    0.223E-03 &  4.3&    0.519E-02 &  3.4\\
 4&    0.221E-04 &  3.3&    0.112E-02 &  2.2\\
\hline 
\end{tabular} \end{center}  \end{table}

  \begin{table}[H]
  \caption{ Error profile for computing \eqref{s-2} by the $P_3$-$P_3$-$P_3$/$P_3$-$P_3$-$P_6$ element on polygonal
        meshes shown in Figure \ref{f-3}.} \label{t18}
\begin{center}  
   \begin{tabular}{c|rr|rr}  
 \hline 
&\multicolumn{4}{c}{ $t=1$ in \eqref{m1}.  }\\ \hline
$G_i$ &  $ \|Q_h w - w_h \| $ & $O(h^r)$ &  $ \3bar Q_h w- w_h\3bar_{W_h} $ & $O(h^r)$   \\  
 1&    0.130E+00 &  0.0&    0.170E+01 &  0.0\\
 2&    0.210E-02 &  6.0&    0.551E-01 &  4.9\\
 3&    0.379E-04 &  5.8&    0.179E-02 &  4.9\\
 \hline 
 &  $ \|Q_h \btheta -\btheta_h \| $ & $O(h^r)$ &  $ \3bar Q_h \btheta- \btheta_h\3bar_{\Theta_h} $ & $O(h^r)$   \\ 
 1&    0.123E+00 &  0.0&    0.945E+00 &  0.0\\
 2&    0.212E-02 &  5.9&    0.318E-01 &  4.9\\
 3&    0.399E-04 &  5.7&    0.116E-02 &  4.8\\
 \hline 
&\multicolumn{4}{c}{ $t=10^{-2}$ in \eqref{m1}.  }\\
 \hline  
$G_i$ &  $ \|Q_h w - w_h \| $ & $O(h^r)$ &  $ \3bar Q_h w- w_h\3bar_{W_h} $ & $O(h^r)$   \\ 
 1&    0.108E-01 &  0.0&    0.154E+00 &  0.0\\
 2&    0.427E-03 &  4.7&    0.926E-02 &  4.1\\
 3&    0.126E-04 &  5.1&    0.311E-03 &  4.9\\
 \hline 
 &  $ \|Q_h \btheta -\btheta_h \| $ & $O(h^r)$ &  $ \3bar Q_h \btheta- \btheta_h\3bar_{\Theta_h} $ & $O(h^r)$   \\  
 1&    0.123E+00 &  0.0&    0.945E+00 &  0.0\\
 2&    0.215E-02 &  5.8&    0.319E-01 &  4.9\\
 3&    0.414E-04 &  5.7&    0.116E-02 &  4.8\\
\hline 
\end{tabular} \end{center}  \end{table}

\end{document}